\theoremstyle{plain} 
\newtheorem{theorem}{Theorem}
\newtheorem{lemma}{Lemma}
\newtheorem{proposition}{Proposition}
\newtheorem{corollary}{Corollary}
\theoremstyle{remark} 
\newtheorem{definition}{Definition}
\newtheorem*{remark}{Remark}
\newcommand\dd{\mathrm{d}}
\begin{document}
	

\begin{center}
	{\Large
		{\sc  Spectral estimation of Hawkes processes from count data}
	}
	\bigskip
	
	Felix Cheysson $^{1,2,3}$ \& Gabriel Lang $^{1}$
\end{center}
\bigskip

{\it
	\noindent $^{1}$ UMR MIA-Paris, Université Paris-Saclay, AgroParisTech, INRAE, Paris, France.\\
	\noindent $^{2}$ Epidemiology and Modeling of bacterial Evasion to Antibacterials Unit (EMEA), Institut Pasteur, Paris, France.\\
	\noindent $^{3}$ Anti-infective Evasion and Pharmacoepidemiology Team, Centre for Epidemiology and Public health (CESP), Inserm / UVSQ, France.
}

\bigskip


{\bf Abstract.} 
This paper presents a parametric estimation method for ill-observed linear stationary Hawkes processes. 
When the exact locations of points are not observed, but only counts over time intervals of fixed size, methods based on the likelihood are not feasible. 
We show that spectral estimation based on Whittle's method is adapted to this case and provides consistent and asymptotically normal estimators, provided a mild moment condition on the reproduction function.
Simulated datasets and a case-study illustrate the performances of the estimation, notably of the reproduction function even when time intervals are relatively large.

{\bf Keywords.} 
count data; Hawkes process; spectral estimation; strong mixing; Whittle estimation


\section{Introduction}
Hawkes processes, introduced in \citep{Hawkes1971a, Hawkes1971b}, form a family of models for point processes which exhibit both self-exciting (\textit{i.e.} the occurrence of any event increases temporarily the probability of further events occurring) and clustering properties: they are special cases of the Poisson cluster process, where each cluster is a continuous-time Galton--Watson tree with a Poisson offspring process, the intensity of which is called the reproduction function \citep{Hawkes1974}.
As they exhibit self-exciting and clustering properties, Hawkes processes are appealing in point process modeling, and while first applications concerned almost exclusively seismology \citep{Adamopoulos1976, Ogata1988}, their use quickly spread to many other disciplines, including neurophysiology \citep{Chornoboy1988}, finance \citep{Bowsher2003, Chavez-Demoulin2005, Bacry2015}, genomics \citep{Reynaud-Bouret2010} and epidemiology \citep{Meyer2012}; see also \citet{Reinhart2018} for a review of Hawkes processes and their applications.

Parameter estimation of Hawkes processes has been studied thoroughly when events are fully observed, relying mainly on maximum likelihood methods \citep{Ogata1978, Ozaki1979, Ogata1988}.
Here we consider that the arrival times are not observed, but interval censored: the timeline is cut into regular bins corresponding to {\it e.g.} days or weeks and the numbers of events in each bin is counted. 
Exact maximum likelihood methods are no more applicable to such bin-count data: since the resulting process is no longer a point process but a time series, either the point process must be reconstructed from the count data, or the estimation method of the process must be adapted to time series.

For the first strategy, one could assign arbitrarily to each point a location within its interval, {\it e.g.} by uniformly drawing them in the interval \citep{Meyer2012}, or attempt a more sophisticated approach such as an expectation maximization algorithm.
Historically for Hawkes processes, this algorithm has been used for multivariate processes \citep{Olson2013} or when the immigration intensity is a renewal process \citep{Wheatley2016}, treating the genealogy as a latent variable.
For an interval censored process, an analogous approach which would consider the arrival times as latent variables is unfortunately not adapted, since there is no closed form for the conditional distribution of the arrival times given the event counts.
Stochastic expectation maximization algorithms \citep{Celeux1995}, which approximate this conditional distribution, do not alleviate this issue since usual convergence results and simulation methods are based on likelihoods of the exponential families \citep{Delyon1999}, which excludes Hawkes processes.

For the second strategy, \citet{Kirchner2016} showed that the distribution of the bin-count sequence of the Hawkes process can be approximated by an INAR($\infty$) sequence and proposed a non-parametric estimation method for the Hawkes process.
In particular, the conditional least-square estimates of the INAR process yields consistent and asymptotically normal estimates for the underlying Hawkes process when the bin size tends to zero \citep{Kirchner2017}.
Unfortunately, while this method is adapted when the bin size can be chosen arbitrarily small, for example when the data are collected continuously (seismology, finance, \textit{etc.}), it is biased when the data has been collected with large bin size or when the events cannot precisely be located in time \citep{Kirchner2017}, as is often the case for biological, ecological and health datasets.
In particular when the bin size is larger than the typical range of the reproduction function, this strategy is not satisfactory, since the INAR model ignores the interaction within bins. 

In this article, as in \citet{Kirchner2017}, we adapt an existing time series estimation method to the case of bin-count sequences from Hawkes processes.
Following \citet{Adamopoulos1976}, we use the Bartlett spectrum of the Hawkes process (\textit{i.e.} the spectral density of the covariance measure of the process) to define as an estimator the minimiser of the log-spectral likelihood, first introduced by \citet{Whittle1952}.
To establish the asymptotic properties of the Whittle estimator, we look at strong mixing properties for the Hawkes processes.

\citet{Rosenblatt1956} introduced the strong mixing coefficient to measure the dependence between $\sigma$-algebras, which sparked decades of interest in the theory of weak dependence for time series and random fields (see \citealp{Bradley2005} for a review of mixing conditions). 
The mixing conditions provide very strong inequalities and coupling methods \citep{Doukhan1994, Rio2000} to achieve proofs of asymptotic properties for parameter estimates, provided that the mixing coefficients decrease fast enough.
However, these coefficients are formulated with respect to rich $\sigma$-algebras and therefore difficult to bound even for very simple models.

\citet{Westcott1972} extended the definition of mixing to point processes, proving for example that cluster Poisson point processes are mixing in the ergodic sense \citep{Westcott1971}.
Yet, without precise information on strong mixing coefficients, the weak dependence framework did not lead to much statistical development in the modeling of point processes.
Recent works addressed the computation of strong mixing coefficients for some classes of point processes \citep{Heinrich2013, Poinas2017}, building on the results for time series and random fields and using the fact that the $\sigma$-algebras generated by countable sets are poorer than those generated by continuous sets.

For practical reason, the absolute regularity mixing coefficients are often preferred since they can be easily computed for Markov processes and functions thereof \citep{Davydov1974}. 
Notably, Hawkes processes with exponential reproduction function are piecewise deterministic Markovian processes \citep{Oakes1975}, and one would hope to compute absolute regularity mixing coefficients.
However, since this would not extend to other reproduction functions, we instead establish a strong mixing condition with polynomial decay rate which holds for any reproduction function, provided it has a finite moment of order $1 + \delta$, $\delta > 0$.
In turn, this proves that our proposed estimation method leads to consistent and asymptotically normal estimators for the parameters of Hawkes processes from bin-count data.

Section \ref{sec:first} recalls definitions and sets notations used in the paper.
Section \ref{sec:alphamixing} contains our first important result: we establish strong mixing properties for the Hawkes process and its bin-count sequences.
Using the cluster and positive association properties, we relate the strong mixing coefficients to those of a single time-continuous Galton--Watson tree, then control the covariance between arrival times using results from elementary Galton--Watson theory.
In Section \ref{sec:estimation}, we focus on the estimation of Hawkes processes from bin-count data.
We derive the spectral density function of the bin-count sequence, taking into account the aliasing caused by sampling the process in discrete time.
Then, using the strong mixing condition and the work of \citet{Dzhaparidze1986} on Whittle's method, we propose a consistent and asymptotically normal estimator to the parameters of the Hawkes process.
Sections \ref{SEC:SIMULATIONS} and \ref{sec:application} provide respectively some numerical experiments and a real-life case-study to illustrate the results of the two preceding sections.
Finally, in Section \ref{sec:conclusion}, we discuss some of the appealing features and extensions of this approach.
The code used in the paper, both for the simulation- and the case-study, is publicly available (\url{https://github.com/fcheysson/code-spectral-hawkes}).

\section{The Hawkes process and its count process}
\label{sec:first}
\subsection{Notation}
In this paper, we consider \emph{simple locally finite point processes} on the measure space $(\mathbb R, \mathcal B(\mathbb R), \ell)$, where $\mathcal B(A)$ denotes the Borel $\sigma$-algebra of $A$ and $\ell$ the Lebesgue measure.
A point process $N$ on $\mathbb R$ may be defined as a measurable map from a probability space $(\mathcal{X}, \mathcal{F}, \mathbb{P})$ to the measurable space $(\mathfrak N, \mathcal N)$ of locally finite counting measures on $\mathbb R$.
The corresponding random set of points, \textit{i.e.} the atoms of $N$, is denoted $\{T_i\}$.
For a function $f$ on $\mathbb R$, we write 
\begin{equation*}
N(f) \coloneqq \int_\mathbb R f(t) N(\dd t) = \sum_i f(T_i)
\end{equation*}
the integral of $f$ with respect to $N$.
Finally, for a Borel set $A$, the cylindrical $\sigma$-algebra $\mathcal E(A)$ generated by $N$ on $A$ is defined by
\begin{equation*}
\mathcal E(A) \coloneqq \sigma\big(\{N \in \mathfrak N: N(B) = m\}, B \in \mathcal B(A), m \in \mathbb N\big).
\end{equation*}

\subsection{The stationary linear Hawkes process}
\label{sec:branch}
A stationary self-exciting point process, or \emph{Hawkes process}, on the real line $\mathbb{R}$ is a point process $N$ with conditional intensity function 
\begin{align*}
\lambda(t) &= \eta + \int_0^t h(t-u) N(\dd u) \\
&= \eta + \sum_{T_i < t} h(t - T_i)
\end{align*}
for $t \in \mathbb R$.
The constant $\eta > 0$ is called the \emph{immigration intensity} and the measurable function $h : \mathbb{R}_{\ge 0} \rightarrow \mathbb{R}_{\ge 0}$ the \emph{reproduction function}. 
The reproduction function can be further decomposed as $h = \mu h^\ast$, where $\mu = \int_\mathbb R h(t) \dd t < 1$ is called the \emph{reproduction mean} and $h^\ast$ is a true density function, $\int_\mathbb{R} h^\ast(t) \dd t = 1$, called the \emph{reproduction kernel}.

Moreover, the linear Hawkes process is a specific case of the Poisson \emph{cluster process} \citep{Hawkes1974}.
Briefly, the process consists of a stream of \emph{immigrants}, the cluster centres, which arrive according to a Poisson process $N_c$ with intensity measure $\eta$. 
Then, an immigrant at time $T_i$ generates \emph{offsprings} according to an inhomogenous Poisson process $N_1(\cdot \vert T_i)$ with intensity measure $h(\cdot - T_i)$. 
These in turn independently generate further offsprings according to the same law, and so on \textit{ad infinitum}.
The \emph{branching processes} $N(\cdot \vert T_i)$, consisting of an immigrant at time $T_i$ and all their \emph{descendants}, are therefore independent.
Finally, the Hawkes process $N$ is defined as the superposition of all branching processes:
\begin{equation*}
\forall A \in \mathcal{B}(\mathbb{R}), N(A) = N_c \big( N(A \vert \cdot) \big).
\end{equation*}

This cluster representation links to the usual Galton--Watson theory. 
Without loss of generality, consider one branching process whose immigrant has time $0$.
Define $Z_k$ as the number of points of generation $k$, \textit{i.e.} $Z_0 = 1$ for the immigrant, then $Z_1$ denotes the number of offsprings that the immigrant generates, $Z_2$ the number of offsprings that the offsprings of the immigrants generate, \textit{etc.}
Then $(Z_k)_{k \in \mathbb{N}}$ is a Galton--Watson process.

In particular, $(Z_{k+1} \,\vert\, Z_k = z) ~ (k, z \in \mathbb{N})$ follows a Poisson distribution with parameter $z\mu$.
Then, by the usual Galton--Watson theory, a sufficient condition for the existence of the Hawkes process is $\mu < 1$ which ensures that the total number of descendants of any immigrant is finite with probability $1$ and has finite mean.
This condition also ensures that the process is strictly stationary.

\subsection{Count processes}
We are interested in the time series generated by the event counts of the Hawkes process, that is the series obtained by counting the events of the process on intervals of fixed length.
We give a definition for both time-continuous and discrete time bin-count processes, according to whether the interval endpoints live on the real line or on a regular grid respectively:
\begin{definition}
The bin-count process with bin size $\Delta$ associated to a point process $N$ is the process $(X_t)_{t \in \mathbb{R}} = \big\{ N\big((t \Delta, (t+1) \Delta]\big)\big\}_{t \in \mathbb{R}}$ generated by the count measure on intervals of size $\Delta$.
The restriction of the bin-count process on $\mathbb Z$, $(X_k)_{k \in \mathbb{Z}}$, is called the bin-count sequence associated to $N$.
\end{definition}

\section{Strong mixing properties}
\label{sec:alphamixing}
Here, we control the strong mixing coefficients of Hawkes processes and their associated bin-count processes.
We recall that, for a probability space $(\mathcal{X}, \mathcal{F}, \mathbb{P})$ and $\mathcal{A}, \mathcal{B}$ two sub $\sigma$-algebras of $\mathcal{F}$, Rosenblatt's strong mixing coefficient is defined as the measure of dependence between $\mathcal{A}$ and $\mathcal{B}$ \citep{Rosenblatt1956}:
\begin{equation*}
\alpha(\mathcal{A}, \mathcal{B}) \coloneqq \mathrm{sup} \big\{ \left| \mathbb{P}(A \cap B) - \mathbb{P}(A) \mathbb{P}(B) \right| : A \in \mathcal{A}, B \in \mathcal{B} \big\}.
\end{equation*}
This definition can be adapted to a point process $N$ on $\mathbb{R}$, by defining (see \citealp{Poinas2017})
\begin{equation*}
\alpha_N(r) \coloneqq \underset{t \in \mathbb R}{\mathrm{sup}} 
~\alpha\big(\mathcal E_{-\infty}^t, \mathcal E_{t+r}^\infty\big),
\end{equation*}
where $\mathcal E_a^b$ stands for $\mathcal E\big((a,b]\big)$, \textit{i.e.} the $\sigma$-algebra generated by the cylinder sets on the interval $(a, b]$, and $\mathcal E_a^\infty = \sigma (\cup_{b > a} \mathcal E_a^b)$.
For the corresponding sequence $(X_k)_{k \in \mathbb{Z}}$, the strong mixing coefficient takes the form
\begin{equation*}
\alpha_X(r) \coloneqq \underset{n \in \mathbb{Z}}{\mathrm{sup}}~\alpha \big( \mathcal{F}_{-\infty}^n, \mathcal{F}_{n+r}^\infty \big),
\end{equation*}
where $\mathcal{F}_a^b$ stands for the $\sigma$-algebra generated by $\{X_k:k\in \mathbb Z, a \le k \le b\}$.

The point process $N$ (resp. the sequence $(X_k)$) is said to be strongly mixing if $\alpha_N(r)$ (resp. $\alpha_X(r)$) $\rightarrow 0$ as $r \rightarrow \infty$.
Intuitively, the strong mixing condition conveys that the dependence between past and future events  decreases uniformly to zero as the time gap between them increases.
Note that, since $\mathcal F_a^b \subset \mathcal E\big((a \Delta,(b+1)\Delta]\big)$, we have that $\alpha_X(r) \le \alpha_N((r-1)\Delta)$ for all $r>1$.

We here state the first important result of this article:
\begin{theorem}
\label{THM:ALPHAMIXING}
Let $N$ be a stationary
Hawkes process on $\mathbb{R}$ 
with reproduction function $h = \mu h^\ast$, where $\mu = \int_\mathbb R h(t) \dd t < 1$ and $\int_\mathbb{R} h^\ast(t) \dd t = 1$.
Suppose that there exists $\delta > 0$ such that the reproduction kernel $h^\ast$ has a finite moment of order $1+\delta$:
\begin{equation*}
\nu_{1+\delta} \coloneqq \int_\mathbb{R} t^{1+\delta} h^\ast(t)\dd t < \infty.
\end{equation*}
Then $N$ is strongly mixing and 
\begin{equation*}
\alpha_N(r) = \mathcal{O}\big(r^{-\delta}\big).
\end{equation*}
Furthermore, if $h^\ast$ admits finite exponential moments, that is, there exists $a_0 > 0$ such that 
\begin{equation*}
    \int_{\mathbb R} e^{a_0 |t|} h^\ast(t) \dd t < \infty,
\end{equation*}
then there exists $a \in (0, a_0]$ such that
\begin{equation*}
\alpha_N(r) = \mathcal{O}\big(e^{-ar}\big).
\end{equation*}
\end{theorem}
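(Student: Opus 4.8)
The plan is to exploit the Poisson cluster representation to reduce the strong mixing coefficient of the whole process to a single-cluster quantity --- the temporal \emph{width} $W$ of one Galton--Watson tree, i.e.\ the largest displacement between the immigrant and any of its descendants --- and then to control $\mathbb P(W>u)$ by elementary branching estimates together with the moment hypothesis on $h^\ast$.

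First I would fix $t$ and classify each cluster by a deterministic function of its immigrant time and its internal randomness: a cluster is \emph{past-only} if its immigrant lies in $(-\infty,t]$ and its last point falls before $t+r$; \emph{future-only} if its immigrant lies after $t$ yet some descendant falls beyond $t+r$; \emph{gap-only} if its immigrant lies after $t$ and it never reaches $t+r$; and \emph{straddling} if its immigrant is $\le t$ while some descendant exceeds $t+r$. Because the immigrants form a Poisson process and the clusters are i.i.d.\ given their immigrant, marking-and-thinning produces four \emph{independent} cluster processes $N_{\mathrm P},N_{\mathrm F},N_{\mathrm G},N_{\mathrm S}$. The restriction of $N$ to the past is a measurable function of $(N_{\mathrm P},N_{\mathrm S})$ and its restriction to the future a function of $(N_{\mathrm F},N_{\mathrm S})$; since $N_{\mathrm P},N_{\mathrm F},N_{\mathrm S}$ are independent, past and future are \emph{conditionally independent given} $N_{\mathrm S}$. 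Writing the covariance of indicators as the covariance of the $[0,1]$-valued conditional probabilities given $N_{\mathrm S}$ --- both constant on $\{N_{\mathrm S}=\varnothing\}$ --- yields, for all $A\in\mathcal E_{-\infty}^{t}$ and $B\in\mathcal E_{t+r}^{\infty}$,
\begin{equation*}
\bigl|\mathbb P(A\cap B)-\mathbb P(A)\mathbb P(B)\bigr|\le 2\,\mathbb P(N_{\mathrm S}\neq\varnothing)\le 2\,\mathbb E\bigl[\,\#\text{straddling clusters}\bigr],
\end{equation*}
and by Campbell's formula and stationarity the mean on the right equals $\eta\int_{r}^{\infty}\mathbb P(W>u)\,\dd u$, independently of $t$. (Equivalently, positive association lets one bound $\alpha_N(r)$ by the covariance density integrated over past$\times$future, which reduces to the same single-cluster pair intensity.)

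Next I would bound the cluster width. With $g(u)=\sum_{k\ge1}\mu^{k}(h^{\ast})^{*k}(u)$ the expected intensity of descendants at displacement $u$ from the immigrant, the union bound gives $\mathbb P(W>u)\le\int_{u}^{\infty}g=\sum_{k\ge1}\mu^{k}\,\mathbb P(S_k>u)$, where $S_k=D_1+\dots+D_k$ with $D_i$ i.i.d.\ of density $h^{\ast}$. Markov's inequality at order $1+\delta$ together with Minkowski's inequality gives $\mathbb P(S_k>u)\le k^{1+\delta}\nu_{1+\delta}\,u^{-(1+\delta)}$, and since $\mu<1$ the series $\sum_{k\ge1}\mu^{k}k^{1+\delta}$ converges; hence $\mathbb P(W>u)=\mathcal O(u^{-(1+\delta)})$ and $\int_r^\infty\mathbb P(W>u)\,\dd u=\mathcal O(r^{-\delta})$, so $\alpha_N(r)=\mathcal O(r^{-\delta})$. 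For the exponential regime I would replace Markov by Chernoff: with $\phi(a)=\int_{\mathbb R}e^{at}h^\ast(t)\,\dd t$ one has $\mathbb P(S_k>u)\le e^{-au}\phi(a)^k$, and continuity of $\phi$ at $0$ with $\phi(0)=1$ lets me choose $a\in(0,a_0]$ small enough that $\mu\phi(a)<1$; summing the geometric series yields $\mathbb P(W>u)=\mathcal O(e^{-au})$ and therefore $\alpha_N(r)=\mathcal O(e^{-ar})$.

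The main obstacle is the reduction in the first step: one must argue cleanly that \emph{all} of the past--future dependence is carried by the straddling clusters. The thinning/conditional-independence argument above makes this rigorous without invoking the finer association machinery, but it hinges on the ``type'' of a cluster being a measurable function of the immigrant location and the cluster's own randomness, so that the four thinned processes are genuinely independent and the width $W$ is almost surely finite (guaranteed by $\mu<1$). Once this reduction is in place, the branching estimates controlling $\mathbb P(W>u)$ are entirely routine.
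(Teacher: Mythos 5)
Your argument is correct and reaches the stated rates, but the key reduction is genuinely different from the paper's. The paper first invokes positive association (Proposition \ref{prop:pa}) together with Poinas's covariance inequality to dominate $|\mathrm{Cov}(\mathbbm 1_{\mathcal A}(N),\mathbbm 1_{\mathcal B}(N))|$ by the count covariance $|\mathrm{Cov}(N(s,t],N(t+r,u])|$ (Lemma \ref{lem:covcond}), then expands this cluster by cluster and controls each term $\mathrm{Cov}(Z_k^{(s,t]},Z_l^{(t+r,u]})$ using the second-order Galton--Watson moments $\mathbb E[Z_kZ_l]$ of Lemma \ref{lem:crossedmoments} and the same Markov/Chernoff tail bounds on the generation-$l$ arrival times that you apply to $S_k$ (Lemmas \ref{lem:onecluster} and \ref{lem:bigO}). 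You instead thin the immigrant Poisson process by cluster type and observe that past and future are conditionally independent given the straddling clusters, which bounds the mixing coefficient by $2\,\mathbb P(N_{\mathrm S}\neq\varnothing)\le 2\eta\int_r^\infty\mathbb P(W>u)\,\dd u$; this needs only the first moment $\mathbb E[Z_k]=\mu^k$ of the tree and no association machinery, and in fact it controls the stronger absolute-regularity ($\beta$-mixing) coefficient rather than just $\alpha_N$, while transferring verbatim to bounded non-constant $\eta$ as in Corollary \ref{COR:NONSTATIONARY} (replace $\eta$ by $\sup\eta$ in Campbell's formula). What the paper's covariance route buys in exchange is the per-cluster covariance bound of Lemma \ref{lem:bigO} as a reusable object: it is extended by a symmetry argument to non-causal reproduction kernels in Section \ref{sec:noncausal}, where your four-way classification would have to be redone because clusters with immigrants in the gap or beyond $t+r$ can then populate $(-\infty,t]$. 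Both approaches rest on the same branching estimates ($\mathbb E[S_k^{1+\delta}]\le k^{1+\delta}\nu_{1+\delta}$, summability of $\sum_k k^{1+\delta}\mu^k$, and the choice of $a$ with $\mu\,\phi(a)<1$) and yield the same conclusions $\alpha_N(r)=\mathcal O(r^{-\delta})$ and $\mathcal O(e^{-ar})$.
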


Notably, this theorem can be extended in the case where the immigration intensity $\eta$ is allowed to vary with respect to time:
\begin{corollary}
\label{COR:NONSTATIONARY}
Let $N$ be a Hawkes process on $\mathbb R$ with reproduction function as in Theorem \ref{THM:ALPHAMIXING}, and with non constant immigration intensity $\eta: t \mapsto \eta(t)$.
Suppose that there exists $\delta > 0$ such that the reproduction kernel $h^\ast$ has a finite moment of order $1+\delta$, and that $\eta(\cdot)$ is non-negative and bounded.
Then $N$ is strongly mixing and 
\begin{equation*}
\alpha_N(r) = \mathcal{O}\big(r^{-\delta}\big).
\end{equation*}
Furthermore, if $h^\ast$ admits finite exponential moments, then there exists $a > 0$ such that
\begin{equation*}
\alpha_N(r) = \mathcal{O}\big(e^{-ar}\big).
\end{equation*}
\end{corollary}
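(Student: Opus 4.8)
The plan is to show that the method behind Theorem \ref{THM:ALPHAMIXING} bounds the mixing coefficient by a product of an \emph{immigration part} and a \emph{single-cluster branching part}, and that only the former is affected when the constant intensity $\eta$ is replaced by a bounded function $\eta(\cdot)$. First I would note that the cluster representation of Section \ref{sec:branch} carries over verbatim: the immigrants now form an \emph{inhomogeneous} Poisson process $N_c$ with intensity measure $\eta(t)\,\dd t$, but each immigrant at time $T_i$ still generates an independent branching process $N(\cdot \mid T_i)$ governed by the \emph{same} reproduction function $h = \mu h^\ast$. Thus the law of a single Galton--Watson cluster, and in particular the spread of its descendants' arrival times, is untouched by the change in $\eta$.

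The key observation is that, since distinct clusters are independent, any dependence between $\mathcal E_{-\infty}^t$ and $\mathcal E_{t+r}^\infty$ can only arise from clusters whose immigrant generates descendants both in $(-\infty, t]$ and in $[t+r, \infty)$. Using the positive association of the process --- which holds for any (inhomogeneous) Poisson cluster process with non-negative reproduction, being a monotone functional of an associated driving process --- the strong mixing coefficient is controlled by the covariance between the counts on the two half-lines, and this covariance is bounded by the expected contribution of the straddling clusters. Integrating over the immigrant location $s$ yields a bound of the form
\begin{equation*}
\alpha\big(\mathcal E_{-\infty}^t, \mathcal E_{t+r}^\infty\big) \le C \int_{\mathbb R} \eta(s)\, p(s - t, r)\, \dd s,
\end{equation*}
where $p(v, r)$ is a single-cluster quantity measuring how far a cluster's progeny spreads beyond a gap of length $r$ when its immigrant sits at relative position $v$, and $C$ is an absolute constant. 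Crucially, by translation invariance of the branching dynamics, $p$ depends on $s$ and $t$ only through $s - t$.

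The boundedness of $\eta$ then does the rest. Setting $\bar\eta \coloneqq \sup_{t} \eta(t) < \infty$ and dominating $\eta(s) \le \bar\eta$, the substitution $v = s - t$ gives
\begin{equation*}
\alpha\big(\mathcal E_{-\infty}^t, \mathcal E_{t+r}^\infty\big) \le C\, \bar\eta \int_{\mathbb R} p(v, r)\, \dd v,
\end{equation*}
a bound that no longer depends on $t$, so that taking the supremum over $t$ is harmless. The integral $\int_{\mathbb R} p(v, r)\,\dd v$ reduces to a tail integral of the cluster's maximal extent and is precisely the single-cluster quantity that the proof of Theorem \ref{THM:ALPHAMIXING} bounds by $\mathcal O(r^{-\delta})$ under $\nu_{1+\delta} < \infty$, and by $\mathcal O(e^{-ar})$ under the exponential-moment assumption. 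Since these rates are governed entirely by the reproduction kernel and not by the immigration law, both conclusions transfer immediately, the constant being merely rescaled by $\bar\eta$.

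The main obstacle will be to verify that the association-based estimate used in the stationary proof genuinely factorizes as above, with the immigration intensity entering only as the multiplicative weight $\eta(s)$ inside the integral and the single-cluster quantity $p(v, r)$ remaining translation-invariant. Once this factorization is secured, the non-constant but bounded $\eta$ is absorbed into the single uniform constant $\bar\eta$, and no Galton--Watson estimates beyond those already established for Theorem \ref{THM:ALPHAMIXING} are required.
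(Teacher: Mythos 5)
Your proposal is correct and follows essentially the same route as the paper: positive association (via infinite divisibility) reduces the mixing coefficient to a covariance of counts, conditioning on the Poisson immigrant process factorizes this into an integral of a translation-invariant single-cluster covariance against $\eta(s)\,\dd s$ (the cross term vanishing since the centre process has zero covariance measure), and the bounded $\eta$ is absorbed into a constant before integrating the $\mathcal O\big((r-v)^{-(1+\delta)}\big)$ (resp. exponential) single-cluster tail bound over the immigrant location. The paper in fact writes its proof of Theorem \ref{THM:ALPHAMIXING} directly with $M_c(\dd y) = \eta(y)\,\dd y$, so the factorization you flag as the main obstacle is exactly Lemma \ref{lem:covcond} combined with Lemma \ref{lem:bigO}, and no new estimate is needed.
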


In brief, the proof has two parts: first, we rescale the problem to a single continuous-time Galton--Watson tree using the cluster representation of the Hawkes process; second, we derive an upper bound for the strong mixing coefficients of the tree.
The idea for the latter is that since the Galton--Watson process goes extinct almost surely and the reproduction kernel $h^\ast$ has a finite moment, then the probability that there exists an offspring of generation $k$ at a far distance from the immigrant goes quickly to 0 when $k$ increases.
We refer to Appendix \ref{sec:proof} for the detailed proof of the theorem.

Finally, as an immediate consequence of Theorem \ref{THM:ALPHAMIXING}, we get the following corollary for Hawkes bin-count process:
\begin{corollary}
\label{cor:alphamixing}
Let $N$ be a Hawkes process as in Corollary \ref{COR:NONSTATIONARY}, and $(X_k)_{k \in \mathbb{Z}} = \big\{ N\big((k \Delta, (k+1) \Delta]\big)\big\}_{k \in \mathbb{Z}}$ its associated bin-count sequence. Then $(X_k)$ is strongly mixing and 
\begin{equation}
\label{eqn:alphamixing}
\alpha_X(r) = \mathcal{O}\big(r^{-\delta}\big).
\end{equation}
Furthermore, if $h^\ast$ admits finite exponential moments, then there exists $a > 0$ such that
\begin{equation*}
\alpha_X(r) = \mathcal{O}\big(e^{-a\Delta r}\big).
\end{equation*}
\end{corollary}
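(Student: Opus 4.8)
The plan is to reduce everything to the bound on $\alpha_N$ already furnished by Corollary \ref{COR:NONSTATIONARY}, exploiting the $\sigma$-algebra inclusion noted just above the statement of Theorem \ref{THM:ALPHAMIXING}. The starting point is the observation that each coordinate $X_k = N\big((k\Delta, (k+1)\Delta]\big)$ is measurable with respect to $\mathcal E\big((k\Delta, (k+1)\Delta]\big)$, so that $\mathcal F_a^b \subset \mathcal E\big((a\Delta, (b+1)\Delta]\big)$. Since Rosenblatt's coefficient $\alpha(\mathcal A, \mathcal B)$ is monotone increasing in each of its two $\sigma$-algebra arguments, I would pass from the discrete filtrations to the continuous ones: $\mathcal F_{-\infty}^n \subset \mathcal E_{-\infty}^{(n+1)\Delta}$ and $\mathcal F_{n+r}^\infty \subset \mathcal E_{(n+r)\Delta}^\infty$. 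The temporal gap between the upper endpoint $(n+1)\Delta$ of the first and the lower endpoint $(n+r)\Delta$ of the second is exactly $(r-1)\Delta$, so taking the supremum over $n$ yields the key inequality $\alpha_X(r) \le \alpha_N\big((r-1)\Delta\big)$, valid for every $r > 1$.

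With this inequality in hand, the two claimed rates follow by direct substitution. For the polynomial case, Corollary \ref{COR:NONSTATIONARY} gives $\alpha_N\big((r-1)\Delta\big) = \mathcal O\big(((r-1)\Delta)^{-\delta}\big)$; since $\Delta$ is a fixed positive constant and $(r-1)^{-\delta} \sim r^{-\delta}$ as $r \to \infty$, this is $\mathcal O(r^{-\delta})$, establishing \eqref{eqn:alphamixing}. For the exponential case, the same corollary yields $\alpha_N\big((r-1)\Delta\big) = \mathcal O\big(e^{-a(r-1)\Delta}\big) = \mathcal O\big(e^{a\Delta}\, e^{-a\Delta r}\big)$, and absorbing the constant factor $e^{a\Delta}$ into the $\mathcal O$ gives the stated bound $\mathcal O\big(e^{-a\Delta r}\big)$.

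I do not anticipate any genuine difficulty here: the corollary is an immediate consequence of Corollary \ref{COR:NONSTATIONARY} once the $\sigma$-algebra inclusion is recorded. The only points requiring minimal care are the monotonicity of $\alpha$ in its arguments, so that enlarging the generating intervals can only increase the coefficient, and the bookkeeping of the two constant factors $\Delta^{-\delta}$ and $e^{a\Delta}$, both of which are harmless under the $\mathcal O$ notation. Note in passing that the exponential rate $e^{-a\Delta r}$ sharpens as the bin size $\Delta$ grows, reflecting that coarser binning widens the effective temporal separation between blocks of the sequence.
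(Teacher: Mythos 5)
Your proof is correct and follows exactly the route the paper intends: the inclusion $\mathcal F_a^b \subset \mathcal E\big((a\Delta,(b+1)\Delta]\big)$ and the resulting inequality $\alpha_X(r) \le \alpha_N\big((r-1)\Delta\big)$ are stated just before Theorem \ref{THM:ALPHAMIXING}, and the corollary is then obtained by substituting the rates from Corollary \ref{COR:NONSTATIONARY} and absorbing the constants $\Delta^{-\delta}$ and $e^{a\Delta}$, precisely as you do. No gaps.
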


\section{Parametric estimation of bin-count sequences}
\label{sec:estimation}
In this section, we apply the strong mixing properties of the Hawkes bin-count sequence to parametric estimation using a spectral approach.
First, we derive the spectral density function for both the time-continuous and discrete time Hawkes bin-count processes.
Then using Whittle's method, we define a parametric estimator of a Hawkes process from its bin-count data.

\subsection{Spectral analysis}
We recall that the \emph{Bartlett spectrum} of a second order stationary point process $N$ on $\mathbb{R}$ is defined as the unique, non-negative, symmetric measure $\Gamma$ on the Borel sets such that, for any rapidly decaying function $\varphi$ on $\mathbb{R}$ (see \citealp[Proposition~8.2.I, equation (8.2.2)]{Daley2003})
\begin{equation}
\label{eqn:varN}
\mathrm{Var}\big(N(\varphi)\big) = \int_{\mathbb R} \vert \widetilde{\varphi}(\omega) \vert^2 \Gamma(\dd\omega),
\end{equation}
where $\widetilde{\cdot}$ denotes the Fourier transform: 
\begin{equation*}
\widetilde{\varphi}(\omega) = \int_\mathbb{R} \varphi(s) e^{-i\omega s} \dd s.
\end{equation*}
By polarising relation \eqref{eqn:varN}, we get, for any rapidly decaying functions $\varphi$ and $\psi$ on $\mathbb{R}$:
\begin{equation}
\label{eqn:bartlett}
\mathrm{Cov}\big(N(\varphi), N(\psi)\big) = \int_\mathbb{R} \widetilde\varphi(\omega) \widetilde{\psi^\ast}(\omega) \Gamma (\dd\omega),
\end{equation}
where $\psi^\ast(u) = \psi(-u)$, so that $\widetilde{\psi^\ast}$ is the complex conjugate of $\widetilde{\psi}$.

For the stationary Hawkes process, the Bartlett spectrum admits a density given by (see \citealp[Example~8.2(e)]{Daley2003})
\begin{equation}
\label{eqn:hbartlett}
\gamma(\omega) = \frac{m}{2 \pi} \left| 1 - \widetilde{h}(\omega) \right|^{-2}
\end{equation}
where $m \coloneqq \mathbb{E}\big[ N(0,1] \big] = \eta \left(1 - \int_\mathbb{R} h(t)dt\right)^{-1}$.

For a time-continuous process, the spectral density $f_c$ forms a Fourier pair with the autocovariance function $r_c$:
\begin{equation*}
f_c(\omega) = \int_{\mathbb R} r_c(u) e^{-i\omega u} \dd u, \qquad r_c(u) = \frac{1}{2\pi} \int_{\mathbb R} f_c(\omega) e^{i\omega u} \dd \omega.
\end{equation*}
Using the second relation with the Bartlett spectrum of a stationary Hawkes process, we derive the spectral density of the time-continuous bin-count process with bin size $\Delta$:
\begin{proposition}
Let $N$ be a stationary Hawkes process on $\mathbb{R}$, and $\{X_t\}_{t \in \mathbb{R}} = \left\{ N(t \Delta, (t+1) \Delta] \right\}_{t \in \mathbb{R}}$ the associated bin-count process.
Then $X_t$ has a spectral density function given by
\begin{equation}
\label{eqn:hspec}
f_c(\omega) = m\,\Delta\,\mathrm{sinc}^2 \left( \frac{\omega}{2} \right) \left| 1 - \widetilde{h} \left( \frac{\omega}{\Delta} \right) \right|^{-2}.
\end{equation}
\end{proposition}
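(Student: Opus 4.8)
The plan is to realise each bin count as an integral of an indicator against $N$ and push it through the polarised Bartlett identity \eqref{eqn:bartlett}, then recognise the resulting expression as an inverse Fourier transform and read off $f_c$ after rescaling the frequency. Set $\varphi_t \coloneqq \mathbbm{1}_{(t\Delta,(t+1)\Delta]}$, so that $X_t = N(\varphi_t)$, and consider the autocovariance $r_c(u) = \mathrm{Cov}\big(N(\varphi_{t+u}), N(\varphi_t)\big)$, which by stationarity does not depend on $t$.

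First I would compute the Fourier transform of the window. A direct integration gives $\widetilde{\varphi_t}(\omega) = e^{-i\omega t \Delta}\,\widetilde{\varphi_0}(\omega)$ with $\widetilde{\varphi_0}(\omega) = (1 - e^{-i\omega\Delta})/(i\omega)$, so that the phase factors carrying the time index cancel in modulus and
\[
|\widetilde{\varphi_0}(\omega)|^2 = \frac{4\sin^2(\omega\Delta/2)}{\omega^2} = \Delta^2\,\mathrm{sinc}^2\!\left(\frac{\omega\Delta}{2}\right),
\]
which isolates the sinc-squared aliasing factor. Substituting $\varphi_{t+u}$ and $\varphi_t$ into \eqref{eqn:bartlett} and using the Bartlett density $\gamma$ of \eqref{eqn:hbartlett}, the $t$-dependence drops out and
\[
r_c(u) = \int_{\mathbb R} e^{-i\omega u \Delta}\,\Delta^2\,\mathrm{sinc}^2\!\left(\frac{\omega\Delta}{2}\right)\gamma(\omega)\,\dd\omega.
\]
It then remains to match this against the inverse-transform representation $r_c(u) = (2\pi)^{-1}\int_{\mathbb R} f_c(\omega) e^{i\omega u}\,\dd\omega$. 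Substituting $\omega \mapsto \omega/\Delta$ and using that $\gamma$, hence the whole integrand, is even in $\omega$ (so that $e^{-i\omega u}$ may be replaced by $e^{i\omega u}$) identifies $f_c(\omega) = 2\pi\Delta\,\mathrm{sinc}^2(\omega/2)\,\gamma(\omega/\Delta)$; inserting $\gamma(\omega/\Delta) = (m/2\pi)\,|1 - \widetilde{h}(\omega/\Delta)|^{-2}$ yields exactly \eqref{eqn:hspec}.

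The one genuine subtlety, and the step I would treat with most care, is that \eqref{eqn:bartlett} is stated for rapidly decaying $\varphi,\psi$, whereas the window $\varphi_t$ is a bounded indicator with jumps and is not rapidly decaying. I would justify the extension by a density argument: approximate $\varphi_0$ by smooth rapidly decaying functions $\varphi^{(n)}$ with $\widetilde{\varphi^{(n)}} \to \widetilde{\varphi_0}$ in $L^2(\Gamma)$, which is legitimate because $\widetilde{\varphi_0} \in L^2(\Gamma)$ --- indeed $\gamma$ is bounded (since $|1-\widetilde{h}(\omega)| \ge 1 - \mu > 0$) and $|\widetilde{\varphi_0}(\omega)|^2$ is integrable, decaying like $\omega^{-2}$ away from the origin. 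Passing to the limit on both sides of \eqref{eqn:bartlett} is then valid, the covariances on the left converging because $N(\varphi^{(n)}) \to N(\varphi_0)$ in $L^2(\mathbb P)$ by \eqref{eqn:varN}. Once this density step is secured, the remaining manipulations are the routine Fourier computations above.
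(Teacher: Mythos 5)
Your proof is correct and follows essentially the same route as the paper's: apply the polarised Bartlett identity to indicator windows, compute $|\widetilde{\varphi_0}(\omega)|^2 = \Delta^2\,\mathrm{sinc}^2(\omega\Delta/2)$, and rescale the frequency to read off $f_c$. The only addition is your density argument justifying the use of \eqref{eqn:bartlett} for non-smooth indicators, a point the paper passes over silently; your justification (boundedness of $\gamma$ via $|1-\widetilde h(\omega)|\ge 1-\mu>0$ and integrability of $|\widetilde{\varphi_0}|^2$) is sound.
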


\begin{proof}
Let $\varphi = \mathbbm{1}_{(0, \Delta]}$ and $\psi = \mathbbm{1}_{(\Delta u, \Delta(u+1)]}$.
We have 
\begin{align*}
\widetilde\varphi(\omega) &= \int_0^\Delta e^{-i\omega s} \dd s = \frac{i}{\omega} \left[e^{-i\omega \Delta}-1\right],\\
\widetilde{\psi^\ast}(\omega) &= \int_{-\Delta(u+1)}^{-\Delta u} e^{-i\omega s} \dd s = \frac{i}{\omega} e^{i\omega \Delta u} \left[1 - e^{i\omega \Delta}\right].
\end{align*}
Then, using (\ref{eqn:bartlett}) and (\ref{eqn:hbartlett}), the autocovariance function of $X_t$ is
\begin{align*}
r_c(u) &= \mathrm{Cov}(X_0, X_u)\\
&=\mathrm{Cov}\big(N(\varphi), N(\psi)\big)\\
&= \int_\mathbb{R} \frac{1}{\omega^2}\, e^{i\omega\Delta u} \left|e^{i\omega\Delta}-1\right|^2 \,\Gamma(\dd\omega)\\
&= \frac{1}{2\pi} \int_\mathbb{R} m\Delta \,\mathrm{sinc}^2 \left(\frac{\omega}{2}\right) \left| 1-\widetilde{h}\left(\frac{\omega}{\Delta}\right)\right|^{-2} e^{i\omega u} \dd\omega.
\end{align*}
\end{proof}

For a discrete-time process, the spectral density $f_d$ once again forms a Fourier pair with the autocovariance function $r_d$:
\begin{equation*}
f_d(\omega) = \sum_{k \in \mathbb Z} r_d(u) e^{-i \omega u}, \qquad r_d(u) = \frac{1}{2\pi} \int_{-\pi}^{\pi} f_d(\omega) e^{i \omega u} \dd \omega.
\end{equation*}
It turns out that the spectral density $f_d$ of a time-continuous process sampled in discrete time can be related to the density  $f_c$ of the process, by taking into account spectral aliasing, which folds high frequencies back onto the apparent spectrum:
\begin{align*}
r_c(u)
&= \frac{1}{2\pi} \int_\mathbb{R} f_c(\omega) e^{i\omega u} \dd\omega\\
&= \frac{1}{2\pi} \sum_{k \in \mathbb Z} \int_{(2k-1)\pi}^{(2k+1)\pi} f_c(\omega) e^{i\omega u} \dd\omega\\
&= \frac{1}{2\pi} \sum_{k \in \mathbb Z} \int_{-\pi}^{\pi} e^{i2k\pi u} f_c(\omega +2k\pi) e^{i\omega u} \dd\omega\\
&= \frac{1}{2\pi} \int_{-\pi}^{\pi} \sum_{k \in \mathbb Z} f_c(\omega + 2k\pi) e^{i\omega u} \dd\omega,
\end{align*}
where the last equality follows when $u \in \mathbb Z$ and from an application of Fubini's theorem.

For the bin-count sequence associated with a stationary Hawkes process, this leads to the following corollary:
\begin{corollary}
Let $N$ be a stationary Hawkes process on $\mathbb{R}$, and $(X_k)_{k \in \mathbb{Z}} = \big\{ N\big((k \Delta, (k+1) \Delta] \big)\big\}_{k \in \mathbb{Z}}$ the associated bin-count sequence.
Then $X_k$ has a spectral density function given by
\begin{equation*}
f_d(\omega) = \sum_{k \in \mathbb{Z}} f_c(\omega+2k\pi)
\end{equation*}
where $f_c(\cdot)$ is the function defined in (\ref{eqn:hspec}).
\end{corollary}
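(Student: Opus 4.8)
The plan is to exploit the fact that the bin-count sequence $(X_k)_{k\in\mathbb Z}$ is precisely the restriction to integer indices of the time-continuous bin-count process $(X_t)_{t\in\mathbb R}$ whose spectral density $f_c$ was computed in the preceding proposition. Consequently the discrete-time autocovariance $r_d$ and the continuous-time autocovariance $r_c$ coincide on the integers, $r_d(u)=r_c(u)$ for every $u\in\mathbb Z$, and the statement will then follow simply by matching Fourier coefficients.

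Concretely, I would start from the aliasing identity established just above the corollary, namely that for $u\in\mathbb Z$
\begin{equation*}
r_c(u)=\frac{1}{2\pi}\int_{-\pi}^{\pi}\Big(\sum_{k\in\mathbb Z}f_c(\omega+2k\pi)\Big)e^{i\omega u}\,\dd\omega .
\end{equation*}
On the other hand, by definition of the discrete-time spectral density, $r_d(u)=\frac{1}{2\pi}\int_{-\pi}^{\pi}f_d(\omega)e^{i\omega u}\,\dd\omega$ for all $u\in\mathbb Z$. Since $r_d(u)=r_c(u)$ on $\mathbb Z$, the two integrals agree for every integer $u$; as the family $\{e^{i\omega u}\}_{u\in\mathbb Z}$ is a complete orthogonal system of $L^2\big([-\pi,\pi]\big)$, the $2\pi$-periodic functions $f_d$ and $\omega\mapsto\sum_{k\in\mathbb Z} f_c(\omega+2k\pi)$ share the same Fourier coefficients and therefore coincide almost everywhere, which is the claimed identity.

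The one point requiring care --- and the step I expect to be the main obstacle --- is the legitimacy of the interchange of summation and integration used to pass from $r_c(u)=\frac{1}{2\pi}\int_{\mathbb R}f_c(\omega)e^{i\omega u}\,\dd\omega$ to the periodized form above, i.e. the Fubini step invoked in the derivation preceding the corollary. This hinges on the absolute convergence of $\sum_{k\in\mathbb Z}f_c(\omega+2k\pi)$ on $[-\pi,\pi]$. Here I would use the explicit form of $f_c$ from \eqref{eqn:hspec}: the factor $\mathrm{sinc}^2(\omega/2)$ decays like $\omega^{-2}$ at infinity, while $\big|1-\widetilde h(\omega/\Delta)\big|^{-2}$ is uniformly bounded because $|\widetilde h(\omega/\Delta)|\le\int_{\mathbb R}h(t)\,\dd t=\mu<1$ forces $\big|1-\widetilde h(\omega/\Delta)\big|\ge 1-\mu>0$. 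Hence $f_c(\omega)=\mathcal{O}\big(\omega^{-2}\big)$, the series $\sum_{k}f_c(\cdot+2k\pi)$ converges absolutely and is integrable on $[-\pi,\pi]$, and both the Fubini interchange and the identification of Fourier coefficients are justified; the same bound also confirms that the periodized sum is a bona fide non-negative, integrable spectral density.
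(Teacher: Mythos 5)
Your proof is correct and follows essentially the same route as the paper, which derives the corollary directly from the aliasing identity $r_c(u)=\frac{1}{2\pi}\int_{-\pi}^{\pi}\sum_{k}f_c(\omega+2k\pi)e^{i\omega u}\,\dd\omega$ for $u\in\mathbb Z$ established just before the statement. Your additional justification of the Fubini step via $\big|1-\widetilde h(\omega/\Delta)\big|\ge 1-\mu>0$ and the $\mathcal O(\omega^{-2})$ decay of $\mathrm{sinc}^2(\omega/2)$ is a valid and welcome elaboration of a point the paper leaves implicit.
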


\subsection{Whittle estimation}
\label{sec:whittle}
For a stationary linear process $(X_k)_{k \in \mathbb Z}$ with spectral density $f_\theta(\cdot)$, $\theta$ an unknown parameter vector, both \citet{Hosoya1974} and \citet{Dzhaparidze1974}, building on the cornerstone laid by \citet{Whittle1952}, proposed as an estimator of $\theta$ the minimizer
\begin{equation}
\label{eqn:estimator}
\widehat \theta_n = \mathrm{arg}~\underset{\theta \in \Theta}{\mathrm{min}} ~\mathcal L_n(\theta)
\end{equation}
where
\begin{equation}
\label{eqn:wlik}
\mathcal{L}_n(\theta) = \frac{1}{4\pi} \int_{-\pi}^\pi \left( \log f_\theta(\omega) + \frac{I_n(\omega)}{f_\theta(\omega)} \right)\dd\omega
\end{equation}
is the log-spectral likelihood of the process, and $I_n(\omega) = (2\pi n)^{-1} \left|\sum_{k=1}^n X_k \, e^{-ik\omega}\right|^2$ is the periodogram of the partial realisation $(X_k)_{1 \le k \le n}$.
They also gave the asymptotic properties of the estimator under appropriate regularity conditions.

\citet{Dzhaparidze1986} extended these results to more general cases, and in particular to stationary processes verifying Rosenblatt's mixing conditions.
The following conditions and theorems are thus adaptations of those found in \citet[Theorem~II.7.1~and~II.7.2]{Dzhaparidze1986} for stationary Hawkes bin-count sequences.

\begin{theorem}
\label{thm:consistent}
Let $N$ be a Hawkes process on $\mathbb{R}$ with reproduction function $h = \mu h^\ast$, where $\mu = \int_\mathbb R h(t) \dd t < 1$ and $\int_\mathbb{R} h^\ast(t) \dd t = 1$, and $(X_k)_{k \in \mathbb{Z}} = \big(N(k\Delta, (k+1)\Delta]\big)_{k \in \mathbb{Z}}$ its associated bin-count sequences with spectral density function $f_\theta$.
Assume the following regularity conditions on $f_\theta$:
\begin{itemize}
\itemsep0em 
\item[(A1)] The true parameter $\theta_0$ belongs to a compact set $\Theta$ of $\mathbb R^p$.
\item[(A2)] For all $\theta_1 \ne \theta_2$ in $\Theta$, then $f_{\theta_1} \ne f_{\theta_2}$ for almost all $\omega$.
\item[(A3)] The function $f_\theta^{-1}$ is differentiable with respect to $\theta$ and its derivatives $(\partial / \partial \theta_k)f_\theta^{-1}$ are continuous in $\theta \in \Theta$ and $-\pi \le \omega \le \pi$.
\end{itemize}
Further assume that there exists a $\delta > 0$ such that the reproduction kernel $h^\ast$ has a finite moment of order $2+\delta$.
Then the estimator $\widehat \theta_n$ defined as in (\ref{eqn:estimator}) (with $\mathcal L_n(\theta)$ given by (\ref{eqn:wlik})), is consistent, \textit{i.e.} $\widehat\theta_n \rightarrow \theta_0$ in probability.
\end{theorem}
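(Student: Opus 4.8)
The plan is to recognise Theorem~\ref{thm:consistent} as an instance of Dzhaparidze's consistency result for Whittle estimation under strong mixing \citep[Theorem~II.7.1]{Dzhaparidze1986}, so that the proof reduces to checking that the bin-count sequence $(X_k)$ and its spectral family $f_\theta$ meet that theorem's hypotheses. These fall into three groups: regularity and identifiability of $f_\theta$, finiteness of enough moments of $(X_k)$, and a sufficiently fast decay of the strong mixing coefficients $\alpha_X(r)$. Assumptions (A1)--(A3) are imposed precisely to supply the first group, so the real content is the interplay of the last two with the $(2+\delta)$-moment condition on $h^\ast$.

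First I would settle the conditions on $f_\theta$. Compactness (A1), identifiability (A2) and differentiability of $f_\theta^{-1}$ (A3) are assumed outright; what remains is to verify that $f_\theta$ is bounded above and below by positive constants, uniformly in $\omega \in [-\pi,\pi]$ and $\theta \in \Theta$. This I would read off the explicit density \eqref{eqn:hspec}: since $h \ge 0$ gives $|\widetilde{h}(\xi)| \le \int_\mathbb{R} h = \mu < 1$ for every $\xi$, one has $(1+\mu)^{-2} \le |1 - \widetilde{h}(\omega/\Delta)|^{-2} \le (1-\mu)^{-2}$, while on $[-\pi,\pi]$ the factor $\mathrm{sinc}^2(\omega/2)$ stays between $4/\pi^2$ and $1$; summing over the aliasing shifts preserves positivity and finiteness. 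Provided the parametrisation keeps $\sup_{\theta \in \Theta}\mu(\theta) < 1$ and $m(\theta)$ bounded, which compactness and stationarity guarantee, this yields $0 < c \le f_\theta(\omega) \le C < \infty$ on $[-\pi,\pi]\times\Theta$.

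Next I would dispatch the moment requirement on $(X_k)$. Because $\mu < 1$, the associated Galton--Watson tree is subcritical and the total progeny of an immigrant has exponentially decaying tails; hence each count $X_k = N((k\Delta,(k+1)\Delta])$ possesses finite moments of all orders, in particular the finite fourth moment used to control the periodogram. Crucially, this step is insensitive to the moments of $h^\ast$, which govern the temporal dispersion of descendants rather than their number.

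The decisive step, where the $(2+\delta)$-moment hypothesis is spent, is the dependence condition. Writing $2+\delta = 1 + (1+\delta)$ and invoking Corollary~\ref{cor:alphamixing}, the finite $(2+\delta)$-moment of $h^\ast$ yields $\alpha_X(r) = \mathcal{O}(r^{-(1+\delta)})$, hence $\sum_{r\ge 1}\alpha_X(r) < \infty$; this summable polynomial decay is exactly what Dzhaparidze's mixing hypothesis demands, and it is the reason the weaker $(1+\delta)$-moment---which yields only the non-summable rate $\mathcal{O}(r^{-\delta})$---does not suffice here. With the three groups of conditions in hand, Theorem~II.7.1 of \citet{Dzhaparidze1986} applies and gives $\widehat\theta_n \to \theta_0$ in probability. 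I expect the main obstacle to be bookkeeping rather than a single sharp estimate: one must match the precise moment-and-mixing trade-off in Dzhaparidze's statement (and accommodate the nonzero mean $m$ of $(X_k)$, e.g.\ by mean-centering before forming the periodogram) to the quantities available here, since the heavy analytic work on weak dependence has already been carried out in Theorem~\ref{THM:ALPHAMIXING}.
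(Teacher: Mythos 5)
Your proposal follows the paper's own proof essentially step for step: both reduce the statement to Theorem II.7.1 of Dzhaparidze, take (A1)--(A3) as given, note that $X_k$ has finite moments of every order (the paper cites Roueff et al.\ rather than arguing via the exponential tails of the subcritical total progeny, but the content is the same), and extract the mixing rate $\alpha_X(r)=\mathcal{O}\big(r^{-(1+\delta)}\big)$ from Corollary \ref{cor:alphamixing} applied with exponent $1+(1+\delta)$. The one imprecision is that the condition to verify is $\sum_{r\ge 1}\big(\alpha_X(r)\big)^{1-2/\gamma}<\infty$ for some $\gamma>2$ with $\mathbb{E}\big[|X_k|^{2\gamma}\big]<\infty$, not mere summability of $\alpha_X(r)$; but since all moments are finite one may take $\gamma$ large enough that $(1+\delta)(1-2/\gamma)>1$, which is exactly how the paper closes the argument, so your "bookkeeping" caveat resolves correctly.
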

\begin{proof}
The only condition from \citet[Theorem~II.7.1]{Dzhaparidze1986} that we need to verify is that there exists a $\gamma > 2$ such that $\mathbb E[|X_k|^{2\gamma}]$ is finite and the following inequality holds:
\begin{equation}
\label{eqn:alphafinite}
\sum_{r=1}^\infty \big(\alpha_X(r)\big)^{1-2/\gamma} < \infty.
\end{equation}
Since the stationary Hawkes process admit finite exponential moments if $h^\ast$ has a moment of order $\delta \in (0, 1]$ \citep[Theorem 4]{Roueff2016}, $\mathbb E[|X_k|^{2\gamma}]$ is finite for any $\gamma$.
Then using Equation \eqref{eqn:alphamixing} from Corollary \ref{cor:alphamixing} there always exists a $\gamma > 2$ that satisfies (\ref{eqn:alphafinite}).
\end{proof}

Define the matrix $\Gamma_\theta$, which would actually be the limit as $n \rightarrow \infty$ of the Fisher's information matrix if the process $(X_k)$ were Gaussian \citep[Section II.2.2]{Dzhaparidze1986}, by the relation:
\begin{equation*}
\Gamma_\theta = \left( \frac{1}{4\pi} \int_{-\pi}^\pi \frac{\partial}{\partial\theta_k}\mathrm{log}\,f_\theta(\omega)\, \frac{\partial}{\partial\theta_l}\mathrm{log}\,f_\theta(\omega)\,\dd\omega \right)_{1 \le k,l \le p}.
\end{equation*}
Since $(X_k)$ is not Gaussian, the asymptotic properties of the Whittle estimator depends on the fourth-order statistics of the process and we define the following matrix:
\begin{equation*}
C_{4, \theta} = \left( \frac{1}{8\pi} \int\int_{-\pi}^\pi f_{4,\theta}(\omega_1, -\omega_1, -\omega_2)\  \frac{\partial}{\partial\theta_k}\frac{1}{f_\theta(\omega_1)}\   \frac{\partial}{\partial\theta_l}\frac{1}{f_\theta(\omega_2)}\ \dd\omega_1\dd\omega_2 \right)_{1\le k,l\le p}
\end{equation*}
where $f_{4,\theta}(\cdot, \cdot, \cdot)$ is the fourth-order cumulant spectral density of the process. 
We have the following result:
\begin{theorem}
\label{thm:asymp}
Let $N$ be a Hawkes process as in Theorem \ref{thm:consistent}, and $(X_k)_{k \in \mathbb{Z}} = \big(N(k\Delta, (k+1)\Delta]\big)_{k \in \mathbb{Z}}$ its associated bin-count sequences with spectral density function $f_\theta$.
Assume conditions (A1), (A2), (A3) and:
\begin{itemize}
\item[(A4)] The function $f_\theta$ is twice differentiable with respect to $\theta$ and its second derivatives $(\partial^2 / \partial \theta_k \partial \theta_l)f_\theta$ are continuous in $\theta \in \Theta$ and $-\pi \le \omega \le \pi$.
\end{itemize}
Then the estimator $\widehat\theta_n$ is asymptotically normal and 
\begin{equation*}
n^{1/2}(\widehat\theta_n - \theta_0) \underset{n \rightarrow \infty}\sim \mathcal N\left(0, \Gamma_{\theta_0}^{-1} + \Gamma_{\theta_0}^{-1} C_{4, {\theta_0}} \Gamma_{\theta_0}^{-1} \right).
\end{equation*}
\end{theorem}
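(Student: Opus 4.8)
The plan is to verify the hypotheses of Dzhaparidze's Theorem II.7.2, in exactly the same spirit as the consistency proof verified those of Theorem II.7.1: the asymptotic machinery is supplied by \citet{Dzhaparidze1986}, and our task is only to check that the Hawkes bin-count sequence meets its regularity, moment and mixing requirements. It is worth recalling the backbone of that theorem to see where $\Gamma_{\theta_0}$ and $C_{4,\theta_0}$ come from. Since $\widehat\theta_n$ minimises $\mathcal L_n$, consistency (Theorem \ref{thm:consistent}) together with the smoothness of $\mathcal L_n$ gives the first-order condition $\nabla\mathcal L_n(\widehat\theta_n)=0$, and Taylor-expanding the score about $\theta_0$ yields
\[
n^{1/2}(\widehat\theta_n-\theta_0) = -\big[\nabla^2\mathcal L_n(\bar\theta_n)\big]^{-1}\,n^{1/2}\nabla\mathcal L_n(\theta_0)
\]
for some $\bar\theta_n$ between $\widehat\theta_n$ and $\theta_0$. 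Asymptotic normality then rests on two facts delivered by the theorem: the Hessian $\nabla^2\mathcal L_n(\bar\theta_n)$ converges in probability to $\Gamma_{\theta_0}$, and the score obeys a central limit theorem $n^{1/2}\nabla\mathcal L_n(\theta_0)\rightarrow\mathcal N\!\big(0,\Gamma_{\theta_0}+C_{4,\theta_0}\big)$, where the non-Gaussian part arises from the fourth-order cumulant spectrum. Slutsky's lemma then produces the sandwich covariance $\Gamma_{\theta_0}^{-1}+\Gamma_{\theta_0}^{-1}C_{4,\theta_0}\Gamma_{\theta_0}^{-1}$, matching the claim.

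It remains to confirm the conditions under which Dzhaparidze establishes the Hessian limit and the score CLT. The regularity requirements are covered by (A1)--(A4): (A1) gives the compact parameter set, (A2) identifiability, (A3) the differentiability of $f_\theta^{-1}$ needed to define the score, and the newly added (A4) the twice-differentiability of $f_\theta$ needed both for the Taylor expansion and for $\Gamma_{\theta_0}$ to be well defined. For the moment requirement I would reuse the argument of Theorem \ref{thm:consistent}: by \citet[Theorem~4]{Roueff2016}, since $h^\ast$ has a finite moment of order $\delta\in(0,1]$, the stationary Hawkes process admits finite exponential moments, so $\mathbb E[|X_k|^m]$ is finite for every $m$, which comfortably covers whatever finite-moment order the CLT demands (the periodogram entering the score is quadratic in $X$, so one needs control up to moments of order eight). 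The mixing requirement is met by Corollary \ref{cor:alphamixing}: under the strengthened hypothesis that $h^\ast$ has a moment of order $2+\delta$, Theorem \ref{THM:ALPHAMIXING} gives the polynomial rate $\alpha_X(r)=\mathcal O\big(r^{-(1+\delta)}\big)$, and combined with finiteness of all moments this makes the relevant mixing sums $\sum_r \big(\alpha_X(r)\big)^{1-2/\gamma}$ converge for a suitable $\gamma>2$, just as in the consistency proof.

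The genuinely new ingredient, and the step I expect to be the main obstacle, is justifying the matrix $C_{4,\theta_0}$: one must check that the fourth-order cumulant spectral density $f_{4,\theta}$ exists as a bounded, continuous function, which amounts to absolute summability of the fourth-order cumulants of $(X_k)$ over $\mathbb Z^3$. The natural route is a cumulant--mixing inequality of the Statulevicius--Jakubowski type: for a strongly mixing sequence with finite moments of all orders, the fourth-order joint cumulants are dominated by (a power arbitrarily close to one of) the mixing coefficients evaluated at the largest lag gap, so the triple sum converges once $\alpha_X(r)$ decays polynomially fast enough, which is guaranteed here by $\alpha_X(r)=\mathcal O\big(r^{-(1+\delta)}\big)$ together with the freedom to take the moment order as large as needed. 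With $f_{4,\theta}$ thus shown to be finite and well behaved, the score CLT yields the covariance $\Gamma_{\theta_0}+C_{4,\theta_0}$, and assembling the three pieces through the display above closes the argument.
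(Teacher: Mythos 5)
Your proposal matches the paper's treatment: the paper gives no separate proof of this theorem, presenting it as a direct adaptation of Dzhaparidze's Theorem~II.7.2 whose moment and mixing hypotheses are verified exactly as in the proof of Theorem~\ref{thm:consistent} (finite exponential moments of the Hawkes process, and $\alpha_X(r)=\mathcal{O}\big(r^{-(1+\delta)}\big)$ from Corollary~\ref{cor:alphamixing} under the order-$(2+\delta)$ moment assumption), with (A4) supplying the extra regularity needed for the second-order expansion. Your additional sketch of the score/Hessian decomposition and of the summability of the fourth-order cumulants goes beyond what the paper writes down and is consistent with it.
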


\begin{remark}
The computation of the integral of the fourth-order cumulant spectra in $C_{4, {\theta_0}}$ is not straightforward.
We refer to the work of \citet{Shao2010} for an elegant way to compute an estimate of this integral.
\end{remark}

\section{Simulation study}
\label{SEC:SIMULATIONS}
We illustrate the estimation procedure and asymptotic properties of the spectral approach for Hawkes bin-count sequences.
To highlight the different theorems of the previous sections, we consider two kernels $h^\ast$ for the reproduction function: the exponential kernel for which all moments exist and the power law kernel whose higher moments are not finite.

The following simulations and estimations have been implemented with our package \emph{hawkesbow}, freely available online (\url{https://cran.r-project.org/web/packages/hawkesbow/index.html}), written in both \textsc R \citep{RCoreTeam2019} and C++ using Rcpp \citep{Eddelbuettel2013}.

\subsection{Simulation procedure}
\label{sec:simproc}
\subsubsection{Exponential kernel}
We first consider a stationary Hawkes process with exponentially decaying reproduction function:
\begin{equation*}
\lambda(t) = \eta + \mu \int_{-\infty}^t \beta e^{-\beta (t-u)} N(\dd u), \qquad t \in \mathbb R,
\end{equation*}
\textit{i.e.} with reproduction kernel $h^\ast(t) = \beta e^{-\beta t}$ for $t \ge 0$.
Note that the process verifies the conditions of both Theorems \ref{thm:consistent} and \ref{thm:asymp}.

Using the cluster representation of the Hawkes process, we simulated 1,000 realisations of the Hawkes process on the interval $[0, T]$ with parameter values $\eta = 1$, $\mu = 0.5$ and $\beta = 1$ and a burn-in interval $[-100, 0]$.
For each of the simulations, we created four time series by counting the events in bins of size $\Delta = 0.25$, $0.5$, $1$ or $2$ respectively.
We then estimated the parameters $\eta$, $\mu$ and $\beta$ as in Section \ref{sec:whittle} for each of the four time series.
We compared these estimates to the usual maximum likelihood estimates (Figure \ref{fig:exponential_estimates}), for which it is assumed no event lies outside of $[0,T]$.
Since the latter use the full information on the location of events, they are arguably better that any estimate based on the bin-count sequences, and provide a best case scenario for the Whittle estimates when the bin size tends to 0.
Minimisation of the log-spectral likelihood, resp. maximisation of the likelihood, was carried out using a limited-memory BFGS optimisation algorithm with bound constraints \citep{Nocedal1980, Liu1989}---$0 < \mu < 1$ and $\eta, \beta > 0$---available in function \textrm{optim} from \textsc R, resp. in function \textrm{nloptr} from package \textsc{nloptr} \citep{Johnson}.
With an exponential kernel, a set of 1,000 simulations and their Whittle estimation with $T = 1000$ and bin size $\Delta = 1$ takes approximately 2 minutes on a laptop computer with an i5-6300HQ Intel CPU.

\subsubsection{Power law kernel}
We now consider a stationary Hawkes process with a power law reproduction kernel: $h_\gamma^\ast(t) = \gamma a^\gamma (a+t)^{-\gamma - 1}$ for $t \ge 0$.
We recall that the moments of a power law distribution are all finite up to, but not including, the order $\gamma$.
We illustrate the theorems of the previous sections by considering three cases for the shape, with each increasingly satisfying the necessary assumptions: 
\textit{(i)} $\gamma = 0.5$, the process does not satisfy the condition of Theorem \ref{THM:ALPHAMIXING}; 
\textit{(ii)} $\gamma = 1.5$, the process is strongly mixing and satisfies the condition of Theorem \ref{THM:ALPHAMIXING}, but not the assumptions of Theorem \ref{thm:consistent}; 
\textit{(iii)} $\gamma = 2.5$, the process is strongly mixing and satisfies the assumptions of Theorems \ref{THM:ALPHAMIXING}, \ref{thm:consistent} and \ref{thm:asymp}.

As for the exponential kernel, we simulated 1,000 realisations of the Hawkes process for each $\gamma \in \{0.5, 1.5, 2.5\}$, with parameter values $\eta = 1$, $\mu = 0.5$, and scale parameter $a = 1.5$ chosen such that the power law kernel $h_{2.5}^\ast$ and the exponential kernel have the same first-order moment.
For the power law kernels $h_{1.5}^\ast$ and $h_{0.5}^\ast$, we kept the scale parameter the same so that the simulations can be compared with the kernel $h_{2.5}^\ast$.
As for the exponential kernel, the Whittle estimates of $\eta$, $\mu$ and $\gamma$ were compared to the usual maximum likelihood estimates, with the scale parameter kept fixed to its true value $a = 1.5$.
Estimation figures can be found in Appendix \ref{sec:figures}.
The constraint bounds for the optimisation routines were $0 < \mu < 1$ and $\eta, \gamma > 0$.
With a power law kernel, a set of 1,000 simulations and their Whittle estimation with $T = 1000$ and bin size $\Delta = 1$ takes approximately 14 minutes on a laptop computer with an i5-6300HQ Intel CPU.

\begin{figure}[h!]
\centering
\begin{subfigure}[t]{\textwidth}
\centering
\includegraphics[width=\textwidth]{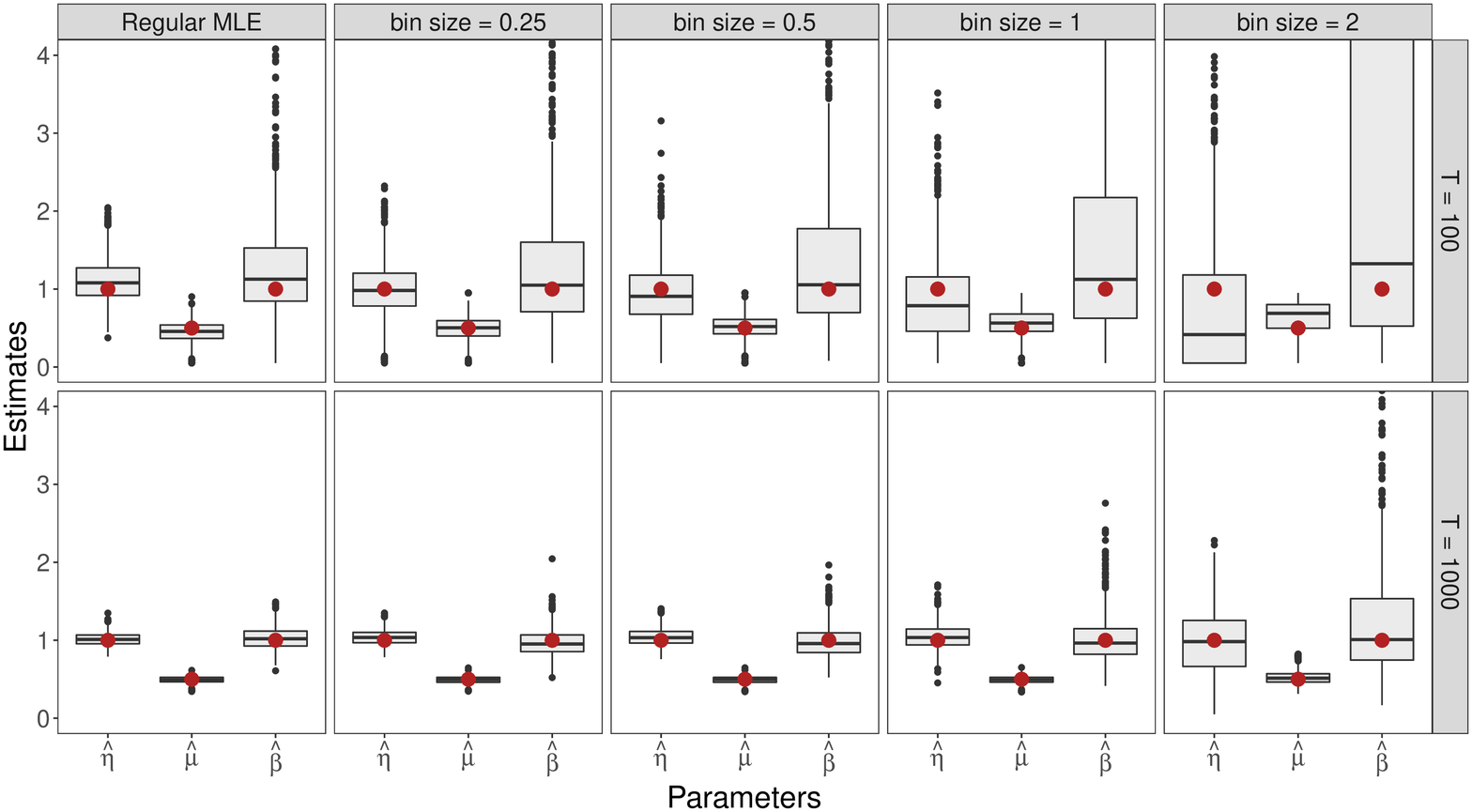}
\caption{Estimates of parameters $\eta$, $\mu$ and $\beta$ for 1,000 simulations on the interval $[0, T]$.
True values (larger dots) are: $\eta = 1$, $\mu = 0.5$, $\beta = 1$.
The left column refers to the maximum likelihood estimates. The other columns refer to the Whittle estimates according to different bin sizes.}
\label{fig:exponential_estimates}
\end{subfigure} 

\vspace{2ex}
\begin{subfigure}[t]{\textwidth}
\centering
\includegraphics[width=\textwidth]{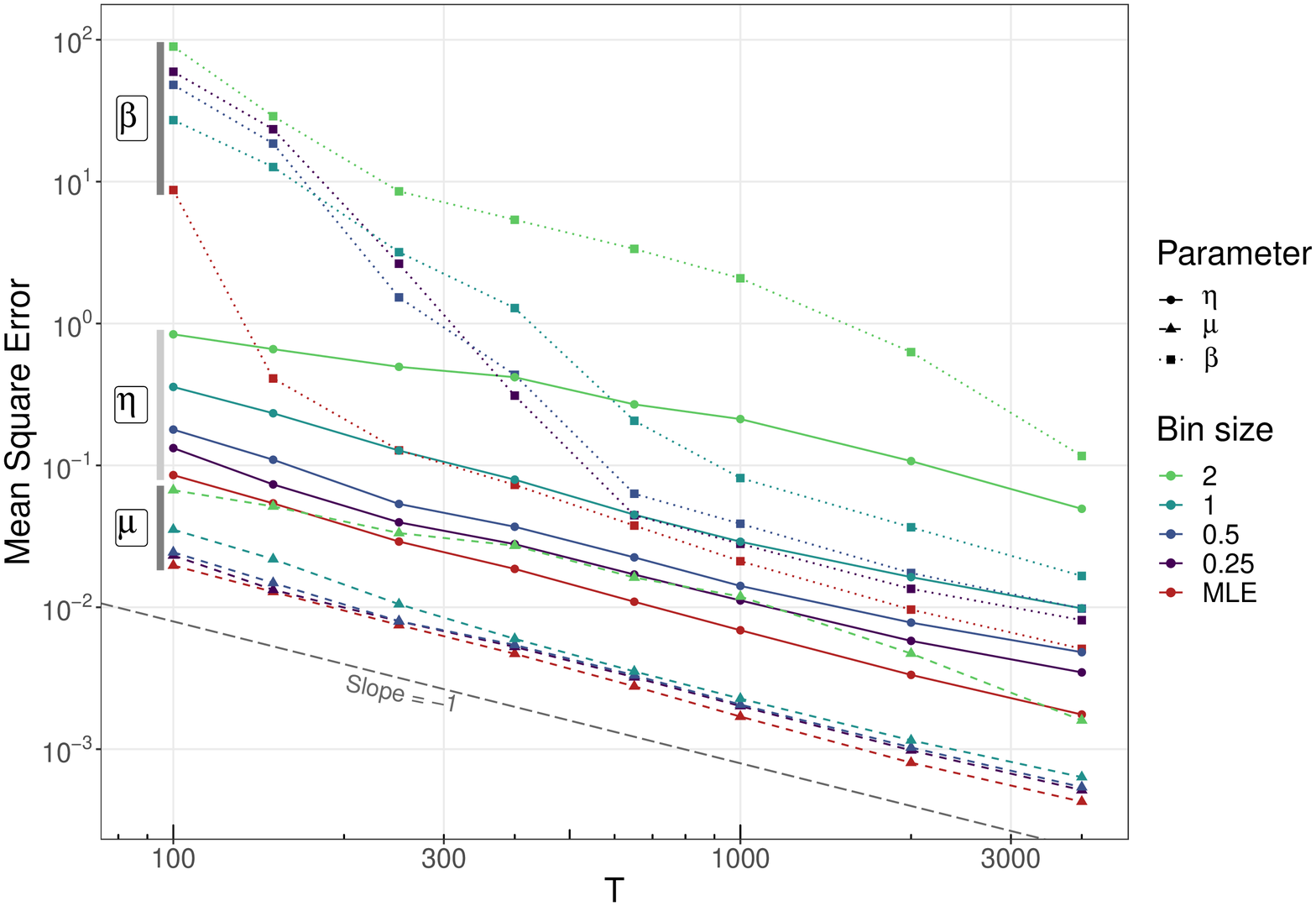}
\caption{Mean square error of the estimates of parameters $\eta$, $\mu$ and $\beta$ for 1,000 simulations on the interval $[0,T]$, in log-log scale.
The dashed grey line represents the ideal slope of $-1$, \textit{i.e.} a rate of convergence of $\mathcal O(n^{-1})$.}
\label{fig:exponential_convergence}
\end{subfigure}
\caption{Performance of the Whittle estimates for the stationary Hawkes process with immigration intensity $\eta = 1$, reproduction mean $\mu = 0.5$, and reproduction kernel $h^\ast(t) = \beta e^{-\beta t}$, where $\beta = 1$.}
\end{figure} 

\subsection{Results and interpretation}
\subsubsection{Exponential kernel}
For $T = 100$ and small bin sizes, the Whittle estimates fare almost as well as the maximum likelihood estimates (see Figure \ref{fig:exponential_estimates}).
The estimation deteriorates massively for higher bin sizes, notably for the exponential kernel rate $\beta$.
This is intuitive, since large bin sizes with respect to the kernel scale make it difficult to detect interactions between points.
This can be related to the probability that an offspring and its parent belong in the same bin: assuming the stationarity of the process, this probability is equal to $\Delta^{-1} \int_0^\Delta \int_u^\Delta \beta e^{-\beta(t-u)} \dd t \dd u = 1 - (\beta \Delta)^{-1} (1 - e^{-\beta \Delta})$.
For example, with $\beta = 1$ and $\Delta = 2$, we get a probability of $0.57$, \textit{i.e.} 57\% of the information about the interaction of the Hawkes process is located within bins, with only 43\% remaining between bins.
Thankfully, by increasing $T$, the asymptotic properties ensure that the Whittle estimates improve, even for large bin sizes.

To further illustrate the asymptotic properties of the estimation, notably its rate of convergence, we compute the mean square error, defined by $\mathrm{MSE} = S^{-1} \sum (\widehat\theta_n - \theta_0)^2$, for the estimates of each set of $S = 1,000$ simulations at given $T$s and bin sizes (Figure \ref{fig:exponential_convergence}).
For large $T$s, the slope of the mean square error with respect to $T$ reaches $-1$ (in log-log scale) for all parameters and almost all bin sizes, illustrating the $\mathcal O(n^{-1})$ rate of convergence stated in Theorem \ref{thm:asymp}.
For small $T$s and both the Whittle and the maximum likelihood estimation methods, the estimates of the immigration intensity $\eta$ and reproduction mean $\mu$ have already reached the optimal rate of convergence, while the MSE for the exponential kernel rate $\beta$ is up to one and a half orders of magnitude higher than what would be expected by extrapolating the MSE for large $T$s.
Finally note that, for reasonable bin sizes ($\Delta\le1$), the Whittle estimates of the reproduction mean $\mu$ have a MSE comparable to those of the maximum likelihood.

\subsubsection{Power law kernel}
Performances for the point estimates are remarkably similar for $\gamma = 2.5$ and $\gamma = 1.5$. 
As in the exponential case, both the immigration intensity $\eta$ and the reproduction mean $\mu$ exhibit the optimal rate of convergence $\mathcal O(n^{-1})$ throughout all $T$s considered for all bin sizes, while the shape parameter $\gamma$ exhibits this asymptotic regime for sufficiently large $T$s ($T \ge 400$).
When $\gamma = 0.5$, the Whittle estimates for both the immigration intensity $\eta$ and the reproduction mean $\mu$ do not considerably improve for the range of $T$s considered, in contrast to the maximum likelihood estimates which approach the optimal asymptotic regime for large $T$s.
For all three kernels, the estimates show a curious behaviour: for large $T$s, almost all estimates for the bin size 0.25 have larger MSE than for bin sizes 0.5 and 1.

Interestingly, the point estimates exhibit good asymptotic behaviours for $\gamma = 1.5$ even though the power law kernel $h_{1.5}^\ast$ does not satisfy the assumptions of Theorems \ref{thm:consistent} and \ref{thm:asymp}, but do not for $\gamma = 0.5$.
This could suggest that the condition on the kernel moments in Theorem \ref{THM:ALPHAMIXING} is too restrictive, and probably lies between 0.5 and 1.5.
Nevertheless, it is mild enough that the spectral approach developed in this article can be useful for applications in many disciplines.

\section{Case-study: transmission of measles in Tokyo}
\label{sec:application}
Measles is a highly contagious viral disease, primarily transmitted via droplets and manifesting as a febrile rash illness.
Despite worldwide efforts to eradicate the disease, it has sprung back in developed countries mainly through imported cases and non vaccinated individuals, generating minor outbreaks.
As the infectious period of measles begins before symptoms are first apparent, in some cases the carriers may be diagnosed after their offsprings.
For this reason, we propose to adapt the Hawkes process to reflect this apparent non causal situation.

\subsection{Extension to a non causal framework}
\label{sec:noncausal}
We consider a natural extension of linear Hawkes processes by relaxing the condition that the reproduction kernel $h^\ast$ has support on $\mathbb R_{\ge 0}$.
Such a process can be defined through the cluster representation presented in Section \ref{sec:branch} by allowing the offsprings to be generated in the past, \textit{i.e.} by allowing $h^\ast$ to take positive values on $\mathbb R_{< 0}$.
We will call this process a non causal Hawkes process, even if its conditional intensity function is intractable.

The results proved in the Section \ref{sec:alphamixing} can be directly extended to non causal Hawkes processes.
Indeed, all proofs but those of Lemmas \ref{lem:onecluster} and \ref{lem:bigO} from Appendix \ref{sec:proof} remain identical.
For Lemma \ref{lem:bigO}, split the integral into two: one from $-\infty$ to $t + r/2$, the other from $t+r/2$ to $+\infty$.
The first integral is treated as written.
For the second integral, Lemma \ref{lem:onecluster} can be adapted using a symmetry argument regarding the location of the immigrant and the interval considered.
In consequence the spectral estimation procedure proposed in Section \ref{sec:estimation} remains applicable, with consistent and asymptotically normal estimators.

\subsection{Estimation of the contagion function}
\label{sec:measles}
In Japan, measles is a notifiable disease: all diagnosed cases must be reported to the government, then investigated to contain potential outbreaks.
The Japanese National Institute of Infectious Diseases publishes weekly reports as well as surveillance data tables for all notifiable diseases (\url{https://www.niid.go.jp/niid/en/survaillance-data-table-english.html}).
We here consider the number of measles cases in the prefecture of Tokyo, from August 2012 to today (Figure \ref{fig:japan_measles}).
We model the weekly count data using a Hawkes process with Gaussian kernel:
\begin{equation*}
h^\ast(t) = \frac{1}{\sigma \sqrt{2\pi}} \,\mathrm{exp} \left( -\frac{(t-\nu)^2}{2\sigma^2} \right), \qquad t \in \mathbb R
\end{equation*}
where $\nu$ can be related to the incubation period and $\sigma$ to the transmission period, then estimate the parameters $\eta$, $\mu$, $\nu$ and $\sigma$ as in Section \ref{sec:whittle}.
We treat the process as stationary because the impact of the seasonality was small compared to local variability.

\begin{figure}[h!]
\includegraphics[width=\textwidth]{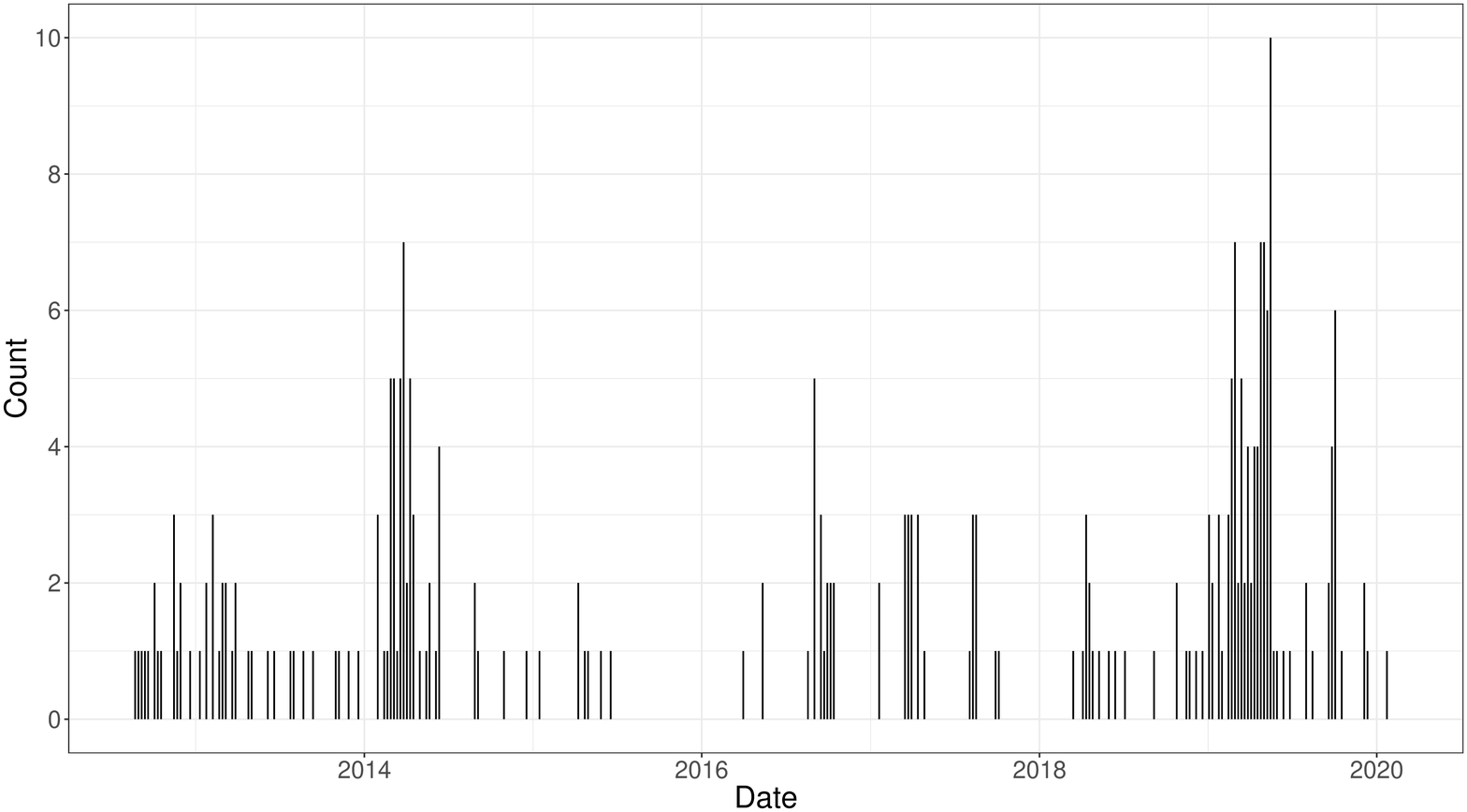}
\caption{Weekly count of measles cases in Tokyo. 
Between the third week of August, 2012 and the third week of February, 2020, 264 cases of measles have been declared in the prefecture of Tokyo.}
\label{fig:japan_measles}
\end{figure} 

For the Gaussian kernel, we find $\widehat\nu = 9.8 ~\mathrm{days}$ and $\widehat\sigma = 5.9 ~\mathrm{days}$, corresponding to an interquartile range of $7.9$ days.
These estimates can be related to clinical features of the virus: the incubation period of measles averages 10-12 days, while the transmission occurs usually from 4 days before to 4 days after rash onset \citep{CDC2015}.
For the immigration intensity and reproduction mean, we find $\widehat\eta = 0.040 ~\mathrm{day}^{-1}$ and $\widehat\mu = 0.72$.
Interestingly, we find that cases with unknown source of transmission (\textit{i.e.} immigrants of the model) represent $1 - \widehat\mu = 28\%$ of all measles cases, a figure close to the data found in \citep[Figure 3]{Nishiura2017}, which reports 23 imported cases amongst 106 contagious events in Japan, 2016.

\subsection{Goodness-of-fit diagnostics}
\label{sec:goodnessoffit}

Assessing the goodness-of-fit of a Hawkes model to the observed data is usually achieved via residual analysis \citep{Ogata1988} where the residuals, which are obtained through an application of the random time change theorem (see \citealp{Papangelou1972} or \citealp[Theorem 7.4.IV]{Daley2003}), are expected to behave like a unitary Poisson point process.
Here, since the arrival times of the process are not observed, we instead use the spectral approach to goodness-of-fit diagnostics for time series models proposed by \citet{Paparoditis2000}.

Using the notations of Subsection \ref{sec:whittle}, the test is based on the distance between a kernel estimator of the normalised periodogram ordinates,
\begin{equation}
\label{eqn:q}
    \widehat q(\omega, \widehat\theta) = \frac{1}{nh} \sum_{j=-m}^m K\left( \frac{\omega - \omega_j}{h} \right) \frac{I_n(\omega_j)}{f_{\widehat\theta}(\omega_j)},
\end{equation}
and its expected value under the null hypothesis, leading to the test statistic given by
\begin{equation*}
    S_{n,h}(\widehat\theta) = n h^{1/2} \int_{-\pi}^\pi \left( \frac{1}{nh} \sum_{j=-m}^m K\left( \frac{\omega - \omega_j}{h} \right) \left( \frac{I_n(\omega_j)}{f_{\widehat\theta}(\omega_j)} - 1 \right) \right)^2 \dd\omega,
\end{equation*}
where $\omega_j = 2\pi j/n$ are the studied frequencies, $m = \lfloor (n-1)/2 \rfloor$, $K$ denotes the kernel and $h$ the bandwidth used to smooth the rescaled periodogram ordinates.
Then, under some regularity assumptions on the studied process $(X_k)$ and the kernel $K$, as $n \rightarrow \infty$ and $h \sim n^{-\rho}$ for some $0 < \rho < 1$ \citep[Theorem 2]{Paparoditis2000},
\begin{equation*}
    S_{n,h}(\widehat\theta) - \mu_h \rightarrow \mathcal N(0, \tau^2),
\end{equation*}
where 
\begin{equation*}
    \mu_h = h^{-1/2} \int_{-\pi}^\pi K^2(x)\dd x \qquad \text{and} \qquad \tau^2 = \frac{1}{\pi} \int_{-2\pi}^{2\pi} \left[ \int_{-\pi}^\pi K(u) K(u+x)\dd u\right]^2 \dd x.
\end{equation*}
Then the null hypothesis, i.e. that the true density function of the process lies in the postulated class of density functions,
\begin{equation*}
    \mathcal H_0: f \in \mathcal F_\Theta = \{ f_\theta, \theta \in \Theta\},
\end{equation*}
can be rejected at an asymptotical $\alpha$-level if $S_{n,h}(\widehat\theta) > \mu_h + u_{1-\alpha}\, \tau$, where $u_{1-\alpha}$ is the $(1 - \alpha)$-th quantile of the standard normal distribution.

For the measles dataset, we calculated the test statistic using the Epanechnikov kernel given by $K(x) = 3(1 - (x/\pi)^2)/2$ for $|x| \le \pi$.
With this choice of kernel, $\mu_h = (12\pi/5)h^{-1/2}$ and $\tau^2 = 2672\pi^2/385$.
This leads to the asymptotic \textit{p}-values $p = 0.61$ and $p = 0.96$ for the bandwidths $h = 0.05$ and $h = 0.10$ respectively.
Bootstrap approximation of the distribution of the test statistic under the null \citep[Section 4]{Paparoditis2000} using 1000 replicates yields similar $p$-values: $p = 0.55$ and $p = 0.97$ respectively.
Hence the chosen Hawkes model seems to correctly reproduce the spectral characteristics of the data (Figure \ref{fig:goodnessOfFit1}).

Additionally, it is possible to derive a goodness-of-fit diagnostic plot which gives helpful information were the postulated model to be rejected, by looking at the asymptotic behaviour of the statistic $\widehat q$ given in \eqref{eqn:q}.
Indeed, under the null \citep[Section 5]{Paparoditis2000},
\begin{equation*}
    Q^2(\omega, \widehat\theta) = \frac{nh(\widehat q(\omega, \widehat\theta) - s_h(\omega))^2}{\frac{1}{2\pi} \int_{-\pi}^\pi K^2(u) \dd u} \rightarrow \chi_1^2,
\end{equation*}
where $s_h(\omega) = (nh)^{-1} \sum_{j=-m}^m K((\omega - \omega_j)/h)$.
Then, a plot of the test statistic $Q^2(\cdot, \widehat\theta)$ can be used to diagnose the frequencies at which the fit of the model must be re-evaluated by comparing the values of $Q^2(\omega, \widehat\theta)$ against the $(1 - \alpha)$-th quantile of the $\chi_1^2$ distribution (Figure \ref{fig:goodnessOfFit2}).

\begin{figure}[h!]
\centering
\begin{subfigure}[t]{\textwidth}
\centering
\includegraphics[width=.9\textwidth]{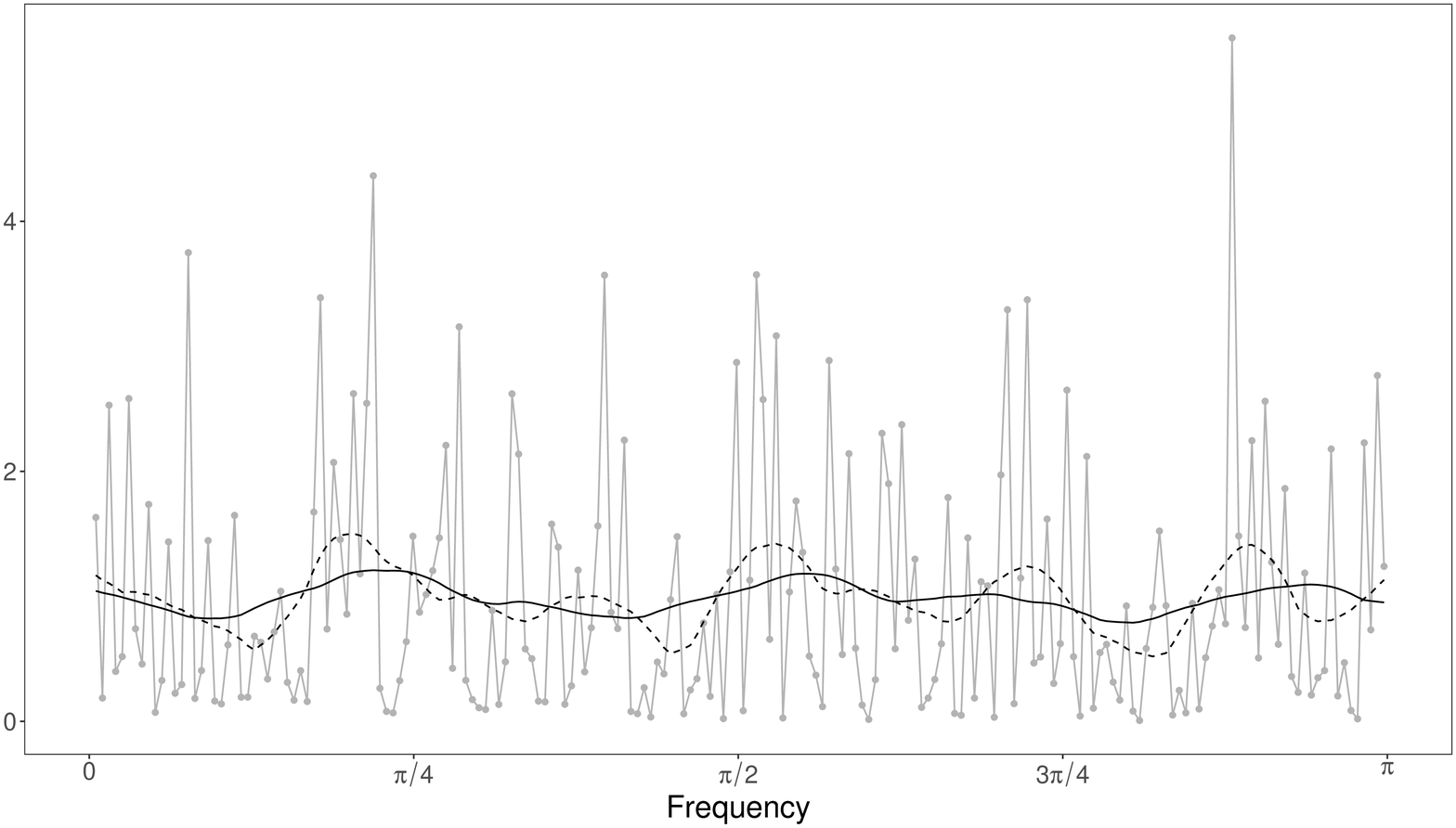}
\caption{Plot of the normalised periodogram ordinates $I(\omega_j)/f_{\widehat\theta}(\omega_j)$ (grey lines) and of the kernel estimate $\widehat q(\omega_j, \widehat\theta)$ with bandwidths $h = 0.05$ (black dashed curve) and $h = 0.10$ (black solid curve).}
\label{fig:goodnessOfFit1}
\end{subfigure} 

\vspace{2ex}
\begin{subfigure}[t]{\textwidth}
\centering
\includegraphics[width=.9\textwidth]{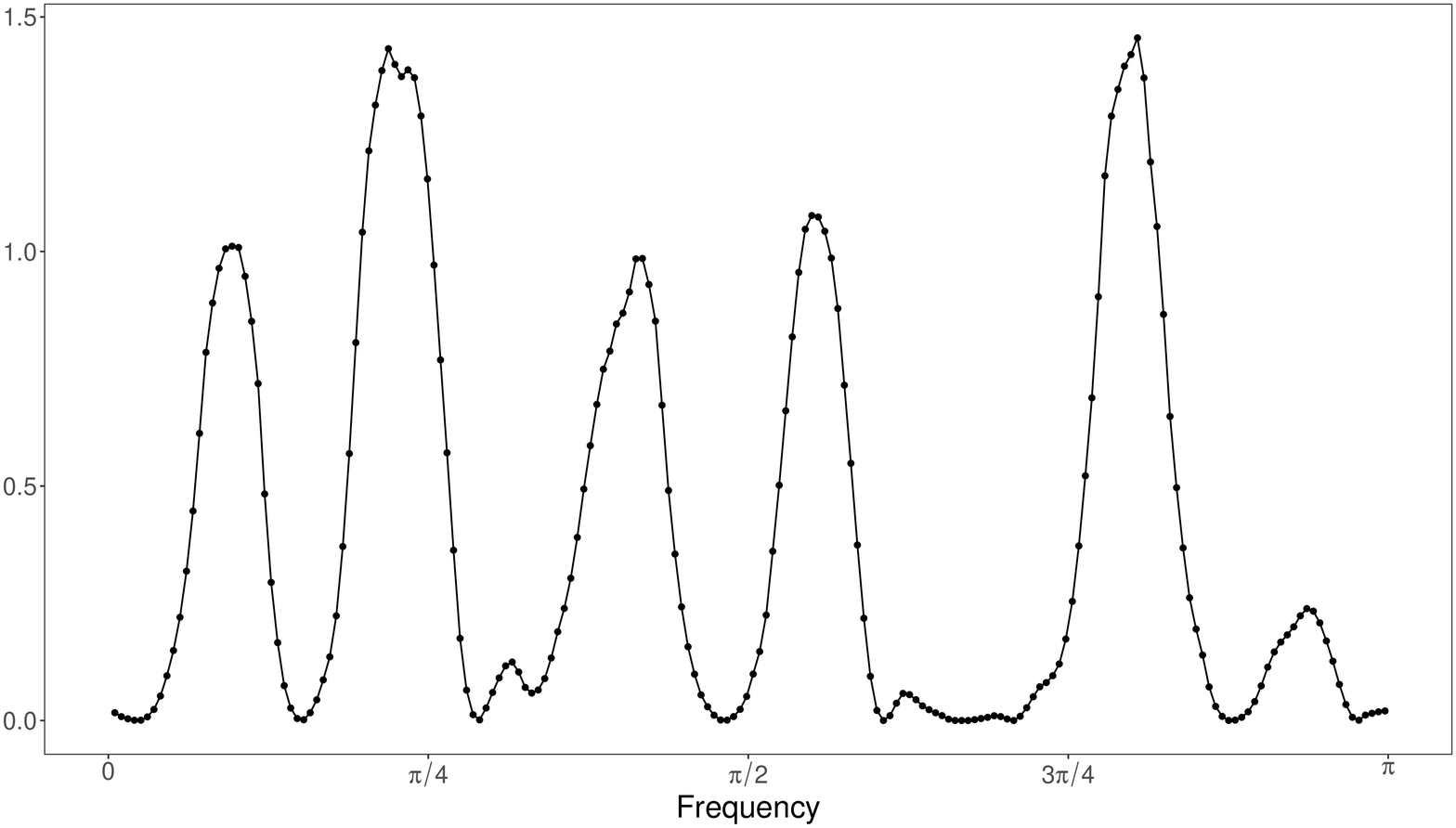}
\caption{Plot of the statistic $Q^2(\omega_j, \widehat\theta)$ with bandwidth $h=0.10$. Note that the value of the 95\% quantile of the $\chi_1^2$ distribution is 3.84, well above the drawing box of the plot.}
\label{fig:goodnessOfFit2}
\end{subfigure}
\caption{Goodness-of-fit diagnostic plots for the measles dataset.}
\end{figure} 

\section{Conclusion}
\label{sec:conclusion}
In this article, we establish a strong mixing condition with polynomial decay rate for stationary Hawkes processes, then propose a Whittle estimation procedure from their count data.
To our knowledge, this is the first work investigating strong mixing conditions for the estimation of Hawkes processes.
This approach has appealing features: \textit{(i)} it has good asymptotic properties, similar to maximum likelihood estimation; \textit{(ii)} it is easy to implement and flexible, since the only user-specified input is the Fourier transform $\widetilde h$ of the reproduction kernel $h^\ast$; \textit{(iii)} it is computationally efficient, with a complexity of $\mathcal O(n\,\mathrm{log}~n)$, $n$ the number of bins, from calculating the periodogram with a fast Fourier transform, compared to $\mathcal O(p^2)$, $p$ the number of events, for the maximum likelihood method (except when the kernel is exponential, in which case the complexity is reduced to $\mathcal O(p)$ with minimal efforts \citep{Ozaki1979}, making it more efficient than our approach); \textit{(iv)} it is particularly well-adapted to applications where the bin size cannot be chosen arbitrarily, \textit{i.e.} the events are only counted in bins of fixed size.


\begin{appendix}
\section{Proof of Theorem \ref{THM:ALPHAMIXING} and Corollary \ref{COR:NONSTATIONARY}}
\label{sec:proof}
By definition, for a given Hawkes process $N$, we have
\begin{equation*}
\alpha_N(r) \coloneqq \underset{t \in \mathbb R}{\mathrm{sup}} 
~\alpha\big(\mathcal E_{-\infty}^t, \mathcal E_{t+r}^\infty\big)
= \underset{t \in \mathbb{R}}{\mathrm{sup}} 
~\underset{\begin{subarray}{c}
  \mathcal{A} \in \mathcal{E}_{-\infty}^t \\
  \mathcal{B} \in \mathcal{E}_{t+r}^\infty
\end{subarray}}
{\mathrm{sup}}  ~\big| \mathrm{Cov}\big(\mathbbm{1}_\mathcal{A}(N), \mathbbm{1}_\mathcal{B}(N)\big) \big|,
\end{equation*}
where $\mathbbm{1}_\mathcal{A}(N)$ is the indicator function of the cylinder set $\mathcal A$, \textit{i.e.} for an elementary cylinder set $\mathcal A_{B, m} = \{N \in \mathfrak N : N(B) = m\}$, $\mathbbm 1_{\mathcal A_{B, m}}(N) = 1$ if $N(B) = m$ and $0$ otherwise.

We recall that a point process $N$ is said to be positively associated if, for all families of pairwise disjoint Borel sets $(A_i)_{1 \le i \le k}$ and $(B_j)_{1 \le j \le l}$, and for all coordinate-wise increasing functions $F : \mathbb{N}^k \rightarrow \mathbb{R}$ and $G : \mathbb{N}^l \rightarrow \mathbb{R}$, it satisfies
\begin{equation*}
\mathrm{Cov} \Big(F\big(N(A_1), \ldots, N(A_k)\big), G\big(N(B_1), \ldots, N(B_l)\big)\Big) \ge 0.
\end{equation*}
We start by stating a useful property (see \citealp[Section 2.1, key property (e)]{Gao2018}), which follows from Hawkes processes being infinitely divisible processes:
\begin{proposition}
\label{prop:pa}
The Hawkes process is positively associated.
\end{proposition}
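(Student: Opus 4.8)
The plan is to exploit the Poisson cluster representation of the Hawkes process given in Section~\ref{sec:branch}, which exhibits $N$ as an infinitely divisible point process, and then to reduce positive association to the (easy) association of a Poisson process. Concretely, I would mark each immigrant $T_i$ of the driving Poisson process $N_c$ with the configuration $\nu_i$ of its branching cluster $N(\cdot \mid T_i)$; since the clusters are independent of one another and of the immigration process, the result is a Poisson process $\Pi$ on the product space $\mathbb R \times \mathfrak N$, and the Hawkes process is recovered as the increasing superposition $N(A) = \sum_i \nu_i(A) = \int \nu(A)\, \Pi(\dd(t,\nu))$ for every Borel set $A$.

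The key structural facts I would assemble are three. First, a Poisson process on any measurable space is positively associated: its increments over disjoint sets are independent, and independent families of random variables are associated (Esary, Proschan and Walkup). Second, association is preserved under coordinate-wise increasing maps, so that any increasing functional of $\Pi$ inherits the property. Third, for fixed pairwise disjoint $(A_i)$ the counts $N(A_i)$ are increasing additive functionals of $\Pi$, whence $F\big(N(A_1),\dots,N(A_k)\big)$ and $G\big(N(B_1),\dots,N(B_l)\big)$ are themselves increasing functionals of the single associated process $\Pi$ (no relation between the two families is needed). The nonnegativity of their covariance then follows directly from the association of $\Pi$, which is exactly the claim; this realises the property of infinitely divisible processes recorded in \citet[Section~2.1]{Gao2018}.

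The main obstacle I anticipate is purely technical: the functional $N(A) = \sum_i \nu_i(A)$ is an infinite sum, since the immigration process has infinitely many points on $\mathbb R$, so I cannot directly treat $N(A)$ as a function of finitely many independent coordinates. I would handle this by a truncation-and-limit argument, first restricting $\Pi$ to immigrants in a bounded window and to clusters of bounded size, where the counts are increasing functions of finitely many independent marks and association is immediate, and then passing to the limit using the standard closure of association under almost sure (monotone) limits. Local finiteness of $N$, guaranteed by $\mu<1$ as recalled in Section~\ref{sec:branch}, ensures these limits are finite almost surely and legitimises the interchange.
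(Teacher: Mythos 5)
Your proposal is correct and takes essentially the same route as the paper, which offers no argument of its own but cites this fact as a known consequence of the infinite divisibility of Poisson cluster processes: your marked-Poisson construction, the Harris--FKG inequality for Poisson processes, and the observation that the counts $N(A_i)$ are increasing additive functionals of $\Pi$ constitute precisely the standard proof underlying that citation. The only step deserving an explicit word is the passage from Esary--Proschan--Walkup (association of independent real-valued variables) to association of \emph{arbitrary} increasing functionals of the Poisson process $\Pi$, which requires noting that, conditionally on the cell counts of a fine discretisation, the conditional expectation of an increasing functional is a coordinate-wise increasing function of those counts; with that remark, your truncation-and-monotone-limit step closes the argument.
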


Using this proposition and \citeauthor{Poinas2017}'s work on associated point processes \citep{Poinas2017}, the following lemma controls the covariance of the indicator functions by the covariance of the count measure of the process, then rescale the problem to a single branching process, thanks to the independence between clusters of a Hawkes process. 
\begin{lemma}
\label{lem:covcond}
Let $s,t,u \in \mathbb{R}$ and $r > 0$ such that $s < t < t+r < u$, and let $\mathcal{A} \in \mathcal{E}_s^t, \mathcal{B} \in \mathcal{E}_{t+r}^{u}$. Then,
\begin{equation*}
\big| \mathrm{Cov} \big(\mathbbm{1}_\mathcal{A}(N), \mathbbm{1}_\mathcal{B}(N)\big) \big| \le \int \left| \mathrm{Cov}\Big(N\big((s,t] \big| y\big), N\big((t+r, u] \big| y\big)\Big) \right| M_c(\dd y)
\end{equation*}
where $N(\cdot \vert y)$ denotes the branching process consisting of an immigrant at time $y$ and all its descendants, and $M_c(\cdot)$ refers to the first-order moment of the centre process $N_c$.
\end{lemma}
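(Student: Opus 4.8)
The plan is to strip the statement down to a single cluster in two moves: first replace the rich cylinder $\sigma$-algebras by the count variables on the two intervals, using that $N$ is positively associated, and then exploit the independence of distinct clusters to localise the surviving covariance to one branching process. Writing the left-hand side as $\mathrm{Cov}\big(\mathbbm{1}_{\mathcal A}(N), \mathbbm{1}_{\mathcal B}(N)\big)$ with $\mathbbm{1}_{\mathcal A}$ measurable for $\mathcal E_s^t$ and $\mathbbm{1}_{\mathcal B}$ for $\mathcal E_{t+r}^u$, I would invoke Proposition \ref{prop:pa} together with the covariance inequality of \citet{Poinas2017} for associated point processes. Association is what does the essential work here: it turns a supremum over arbitrary cylinder events supported on the two disjoint intervals into a bound by a single second-order quantity built from the counts $N((s,t])$ and $N((t+r,u])$.

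For the second move I would substitute the cluster representation of Section \ref{sec:branch}, $N(\cdot) = \sum_i N(\cdot \mid T_i)$, where the immigrants $\{T_i\}$ form the Poisson centre process $N_c$ with mean measure $M_c$ and, conditionally on $N_c$, the branching processes $N(\cdot \mid T_i)$ are independent. Conditioning on $N_c$ and using that distinct clusters are independent, the conditional covariance is a sum of single-cluster covariances, and Campbell's theorem for the Poisson immigrant stream collapses this to a single-cluster integral,
\begin{equation*}
\mathbb E\big[\mathrm{Cov}\big(N((s,t]), N((t+r,u]) \mid N_c\big)\big] = \int \mathrm{Cov}\big(N((s,t] \mid y), N((t+r,u] \mid y)\big)\, M_c(\dd y),
\end{equation*}
which is exactly the right-hand side of the lemma. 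The single-cluster covariance on the right is nonnegative, a branching process being itself associated, so the absolute value is harmless for the causal process and is retained only for the symmetry argument needed in the non-causal extension of Section \ref{sec:noncausal}.

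I expect the first move to be the true obstacle, and it is more subtle than the identity above suggests. The count covariance $\mathrm{Cov}\big(N((s,t]), N((t+r,u])\big)$ decomposes, by the law of total covariance, into the within-cluster term displayed above plus a contribution $\mathrm{Cov}\big(\mathbb E[N((s,t]) \mid N_c], \mathbb E[N((t+r,u]) \mid N_c]\big)$ coming from fluctuations of the immigrant stream. The crux is therefore to carry out the association step so that the bound retains only the within-cluster correlation — the dependence carried by clusters straddling both intervals — while the immigrant fluctuations, being independent across the disjoint regions at the level of distinct clusters, do not enter. Making this separation rigorous, and wielding the associated-process covariance inequality over the full family of cylinder events, is where the weight of the argument lies; once it is done, the second move is bookkeeping with Campbell's formula and the independence of clusters.
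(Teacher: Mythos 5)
Your two-step plan --- the Poinas covariance inequality for associated point processes to pass from cylinder events to interval counts, then the cluster representation to localise to a single branching process --- is exactly the route the paper takes. The gap is in the second step, and you have located it yourself without closing it. The law of total covariance gives
\begin{equation*}
\mathrm{Cov}\big(N((s,t]),N((t+r,u])\big) = \int \mathrm{Cov}\big(N((s,t]\,\vert\, y),N((t+r,u]\,\vert\, y)\big)\,M_c(\dd y) \;+\; \mathrm{Cov}\big(N_c(m_A),N_c(m_B)\big),
\end{equation*}
where $m_A(y)=\mathbb E\big[N((s,t]\,\vert\, y)\big]$ and $m_B(y)=\mathbb E\big[N((t+r,u]\,\vert\, y)\big]$; for a Poisson centre process the second term equals $\int m_A(y)\,m_B(y)\,M_c(\dd y)$, the diagonal contribution of the covariance measure of $N_c$. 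This term is \emph{not} killed by independence of distinct clusters: it is generated by single immigrants whose cluster can reach both intervals, and it is strictly positive for a Hawkes process whenever $h^\ast$ has unbounded support. Your stated reason for discarding it (``the immigrant fluctuations, being independent across the disjoint regions at the level of distinct clusters, do not enter'') is therefore a non sequitur, and no refinement of the association step can rescue it, since the Poinas inequality bounds the indicator covariance by the \emph{full} count covariance, cross term included. As written, your argument establishes the lemma only up to this additive term, and you explicitly defer the step that would remove it.

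The paper's own proof handles this point by citing the second-order moment formula for Poisson cluster processes (Daley--Vere-Jones, Exercise 6.3.4), in which the off-diagonal contribution is weighted by the factorial covariance measure of the centre process and hence vanishes for a Poisson $N_c$; the object you must still account for is precisely the diagonal term $\int m_A(y)\,m_B(y)\,M_c(\dd y)$. If you keep your law-of-total-covariance route, the honest conclusion is the bound with $\mathbb E\big[N((s,t]\,\vert\, y)\,N((t+r,u]\,\vert\, y)\big]$ in place of the covariance inside the integral. That weaker bound would in fact still suffice for Lemma \ref{lem:bigO} and Theorem \ref{THM:ALPHAMIXING}: $m_A(y)$ is bounded by the mean cluster size $(1-\mu)^{-1}$, and the same Markov-plus-Lemma-\ref{lem:vonbahr} argument used in Lemma \ref{lem:onecluster} gives $m_B(y)=\mathcal O\big((t+r-y)^{-(1+\delta)}\big)$, so the extra term decays at the required rate. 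But some such argument must be supplied; it is exactly the step you flag as ``where the weight of the argument lies'' and then leave undone.
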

\begin{proof}
Using Proposition \ref{prop:pa} and \citep[Theorem~2.5]{Poinas2017}, we have
\begin{equation*}
\big| \mathrm{Cov} \big(\mathbbm{1}_\mathcal{A}(N), \mathbbm{1}_\mathcal{B}(N)\big) \big| \le \Big| \mathrm{Cov}\Big(N\big((s, t]\big), N\big((t+r, u]\big) \Big) \Big|.
\end{equation*}
Then, conditioning on the cluster centre process $N_c$ (see for example \citealp[Exercise 6.3.4]{Daley2003}):
\begin{align*}
\mathrm{Cov}\Big(N\big((s, t]\big), N\big((t+r, u]\big) \Big) 
&= \int \mathrm{Cov}\Big(N\big((s, t] \big| y\big), N\big((t+r, u] \big| y\big)\Big) M_c(\dd y)\\
&+ \int \mathbb{E} \Big[N\big((s, t] \big| x\big)\Big] \mathbb{E} \Big[N\big((t+r, u] \big| y\big)\Big] C_c(\dd x \times \dd y),
\end{align*}
where $M_c(\cdot)$ and $C_c(\cdot)$ refer to the first-order moment measure and the covariance measure of the centre process $N_c$ respectively.
Since the centre process is Poisson, $C_c \equiv 0$ and the second term is zero.
\end{proof}

We are now interested in deriving an upper bound for the covariance of counts of a typical single branching process $N(\cdot | y)$.
Without loss of generality, we consider a cluster whose immigrant is located at time $y = 0$.
Let $Z_k$ denote the number of points of generation $k$, and by $Z_k^{(s,t]}$ those that are located in the interval $(s, t]$ ($s, t \in \mathbb R$).
Note that, for generation $0$, there is a single immigrant located at time $0$.
By definition, we have
\begin{equation*}
N\big((s, t] \big| 0 \big)= \sum_{k=0}^{+\infty} Z_k^{(s,t]}.
\end{equation*}
Then, the covariance between two intervals for a branching process is
\begin{equation*}
\mathrm{Cov}\Big(N\big((s, t] \big| 0\big), N\big((t+r, u] \big| 0\big) \Big) = \sum_{k=0}^{+\infty} \sum_{l=0}^{+\infty} \mathrm{Cov}\left(Z_k^{(s,t]}, Z_l^{(t+r, u]}\right).
\end{equation*}

Before continuing further, we will need a few results on the Galton--Watson process $(Z_k)_{k \in \mathbb{N}}$:
\begin{lemma}
For $k \ge 0$, the expectation, variance and second-order moment of $Z_k$ are
\begin{align*}
\mathbb{E}[Z_k] &= \mu^k, \\
\mathrm{Var}(Z_k) &= \mu^k \sum_{j=0}^{k-1} \mu^j = \mu^k \frac{1-\mu^{k}}{1-\mu}, \\
\mathbb{E}[Z_k^2] &= \mu^k \sum_{j=0}^{k} \mu^j = \mu^k \frac{1-\mu^{k+1}}{1-\mu}.
\end{align*}
\end{lemma}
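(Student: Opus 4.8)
The plan is to exploit the branching structure recorded in Section~\ref{sec:branch}: conditionally on $Z_k = z$, the next generation $Z_{k+1}$ is Poisson with parameter $z\mu$, so that $\mathbb{E}[Z_{k+1} \mid Z_k] = \mu Z_k$ and, since the Poisson law has equal mean and variance, $\mathrm{Var}(Z_{k+1} \mid Z_k) = \mu Z_k$. Everything follows from these two identities together with the tower property, the law of total variance, and the initial condition $Z_0 = 1$.

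First I would treat the mean. Taking expectations in $\mathbb{E}[Z_{k+1} \mid Z_k] = \mu Z_k$ gives the one-step recursion $\mathbb{E}[Z_{k+1}] = \mu\, \mathbb{E}[Z_k]$, and an immediate induction from $\mathbb{E}[Z_0] = 1$ yields $\mathbb{E}[Z_k] = \mu^k$.

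For the variance, I would apply the law of total variance,
\[
\mathrm{Var}(Z_{k+1}) = \mathbb{E}\big[\mathrm{Var}(Z_{k+1}\mid Z_k)\big] + \mathrm{Var}\big(\mathbb{E}[Z_{k+1}\mid Z_k]\big) = \mu\,\mathbb{E}[Z_k] + \mu^2\,\mathrm{Var}(Z_k),
\]
and substitute $\mathbb{E}[Z_k] = \mu^k$ to obtain the linear recursion $\mathrm{Var}(Z_{k+1}) = \mu^{k+1} + \mu^2\,\mathrm{Var}(Z_k)$ with $\mathrm{Var}(Z_0) = 0$. Unrolling this recursion (or verifying by induction) gives $\mathrm{Var}(Z_k) = \sum_{j=0}^{k-1}\mu^{k+j} = \mu^k\sum_{j=0}^{k-1}\mu^j$, which is the stated closed form $\mu^k (1-\mu^k)/(1-\mu)$ after summing the geometric series.

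Finally, the second-order moment follows at once from $\mathbb{E}[Z_k^2] = \mathrm{Var}(Z_k) + (\mathbb{E}[Z_k])^2 = \mu^k(1-\mu^k)/(1-\mu) + \mu^{2k}$; placing both terms over the common denominator $1-\mu$ collapses the numerator $1-\mu^k + \mu^k(1-\mu)$ to $1-\mu^{k+1}$, giving $\mathbb{E}[Z_k^2] = \mu^k(1-\mu^{k+1})/(1-\mu) = \mu^k\sum_{j=0}^k\mu^j$. There is no genuine obstacle here: the content is the standard first- and second-moment theory of a Galton--Watson process with Poisson$(\mu)$ offspring, and the only care needed is in solving the inhomogeneous geometric recursion for the variance and in the algebraic simplification of the second moment.
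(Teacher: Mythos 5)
Your proof is correct: the recursions you derive are right, the unrolling of the inhomogeneous geometric recursion for the variance gives $\mathrm{Var}(Z_k)=\sum_{j=0}^{k-1}\mu^{k+j}$ as claimed, and the algebra for $\mathbb{E}[Z_k^2]$ checks out. The paper proves the same lemma via the probability-generating functions: it differentiates the composition identity $\phi_{k+1}=\phi_k\circ\phi_1$ twice at $s=1$ to obtain recursions for $\phi_k'(1)=\mathbb{E}[Z_k]$ and the second factorial moment $\phi_k''(1)$, then plugs in $\phi_1'(1)=\mu$ and $\phi_1''(1)=\mu^2$ for the Poisson offspring law. The underlying decomposition is the same in both arguments --- condition on generation $k$ and use that $Z_{k+1}$ given $Z_k$ is Poisson with mean $\mu Z_k$ --- but your route replaces the analytic formalism (PGF differentiation and conversion from factorial moments to the variance) with the tower property and the law of total variance, which is more elementary and arguably more transparent, at the cost of having to solve the inhomogeneous recursion for $\mathrm{Var}(Z_k)$ explicitly rather than reading the pattern off the derivative identities. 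Either way the Poisson structure enters only through the two numbers $\mathbb{E}[Z_1]=\mu$ and $\mathrm{Var}(Z_1)=\mu$ (equivalently $\phi_1'(1)=\mu$, $\phi_1''(1)=\mu^2$), so both proofs generalize verbatim to any offspring law with those first two moments.
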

\begin{proof}
Call $\phi_k$ the probability-generating function of $Z_k$:
\begin{equation*}
\forall s \in [0,1], \phi_k(s) = \mathbb{E}[s^{Z_k}].
\end{equation*}
It is well-known, for a Galton--Watson process, that $(\phi_k)_{k \in \mathbb{N}}$ verifies
\begin{equation*}
\forall k \in \mathbb{N}, \phi_{k+1} = \phi_k \circ \phi_1
\end{equation*}
where in our case $\phi_1$ is the probability-generating function of a Poisson process with parameter $\mu$.
Differentiating the recurrence relation up to order 2 then evaluating it at $s = 1$ gives the following relations:
\begin{align*}
\phi_{k+1}^\prime(1) &= \phi_1^\prime(1) \phi_k^\prime(1),\\
\phi_{k+1}^{\prime\prime}(1) &= \phi_1^{\prime\prime}(1) \phi_k^\prime(1) + (\phi_1^\prime(1))^2 \phi_k^{\prime\prime}(1),
\end{align*}
where $\phi_k^\prime(1)$ and $\phi_k^{\prime\prime}(1)$ are related to the moments of the process by
\begin{equation*}
\mathbb{E}[Z_k] = \phi_k^\prime(1), \qquad\qquad\qquad \mathrm{Var}(Z_k) = \phi_k^{\prime\prime}(1) + \phi_k^\prime(1) - (\phi_k^\prime(1))^2.
\end{equation*}
Finally plugging in the initial conditions for the Poisson variable $Z_1$, $\phi_1^\prime(1) = \mu$ and $\phi_1^{\prime\prime}(1) = \mu^2$, yields the expected result.
\end{proof}

\begin{lemma}
\label{lem:crossedmoments}
For $0 \le k \le l$, the covariance and second-order product moment of $(Z_k)$ are
\begin{align*}
\mathrm{Cov} ( Z_k, Z_l ) &= \mu^{l} \sum_{j=0}^{k-1} \mu^j = \mu^{l} \frac{1-\mu^{k}}{1-\mu}, \\
\mathbb{E} [Z_k Z_l] &= \mu^{l} \sum_{j=0}^{k} \mu^j = \mu^{l} \frac{1-\mu^{k+1}}{1-\mu}.
\end{align*}
\end{lemma}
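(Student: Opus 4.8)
The plan is to exploit the branching (Markov) structure of the Galton--Watson process in order to reduce the cross moment $\mathbb{E}[Z_k Z_l]$ to the second moment $\mathbb{E}[Z_k^2]$, which is already available from the preceding lemma. The point is that conditioning on an intermediate generation decouples the past from the future of the tree.

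First I would fix $0 \le k \le l$ and condition on the generation-$k$ population $Z_k$. By the defining branching property, given $Z_k = z$ the generation-$l$ count $Z_l$ is distributed as a sum of $z$ independent copies of the generation-$(l-k)$ population of a tree started from a single individual, each copy having mean $\mathbb{E}[Z_{l-k}] = \mu^{l-k}$. This yields the key identity
\begin{equation*}
\mathbb{E}[Z_l \mid Z_k] = \mu^{l-k}\, Z_k.
\end{equation*}
Applying the tower property then gives
\begin{equation*}
\mathbb{E}[Z_k Z_l] = \mathbb{E}\big[Z_k\, \mathbb{E}[Z_l \mid Z_k]\big] = \mu^{l-k}\, \mathbb{E}[Z_k^2],
\end{equation*}
and substituting $\mathbb{E}[Z_k^2] = \mu^k (1-\mu^{k+1})/(1-\mu)$ from the previous lemma produces the claimed product moment $\mathbb{E}[Z_k Z_l] = \mu^l (1-\mu^{k+1})/(1-\mu)$. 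The covariance then follows by subtracting $\mathbb{E}[Z_k]\mathbb{E}[Z_l] = \mu^{k+l}$ and simplifying, using the elementary identity $(1-\mu^{k+1}) - \mu^k(1-\mu) = 1-\mu^k$ to obtain $\mathrm{Cov}(Z_k,Z_l) = \mu^l (1-\mu^k)/(1-\mu)$.

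The only nonroutine step is the conditional expectation identity: one must justify rigorously that, given $Z_k$, the process evolves as $Z_k$ independent and identically distributed sub-trees, so that $Z_l$ is a $Z_k$-fold independent sum whose conditional mean is $\mu^{l-k} Z_k$. This is precisely the branching property of the Galton--Watson process, and once it is in place the result reduces entirely to the second moment already computed in the preceding lemma together with the finite geometric sum $\sum_{j=0}^{k-1}\mu^j = (1-\mu^k)/(1-\mu)$. I expect this structural step, rather than the algebra, to be the conceptual crux of the argument.
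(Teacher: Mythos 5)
Your proof is correct and rests on the same idea as the paper's: the branching property reduces the cross moment to the second moment of $Z_k$ already computed, the paper doing this via the one-step recurrence $\mathrm{Cov}(Z_k, Z_{k+h}) = \mu\,\mathrm{Cov}(Z_k, Z_{k+h-1})$ while you apply the equivalent identity $\mathbb{E}[Z_l \mid Z_k] = \mu^{l-k} Z_k$ in a single conditioning step. The algebra checks out in both directions, so this is essentially the paper's argument in a slightly more compact form.
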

\begin{proof}
This is a straightforward recurrence, noting that
\begin{align*}
\mathrm{Cov}(Z_k, Z_{k+h}) &= \mathrm{Cov}\left( Z_k,  \sum_{i=1}^{+\infty} \mathbbm{1}_{\{Z_{k+h-1} \ge i\}} Z_{1, i} \right)\\
&= \mathbb{E}\left[Z_{1,1}\right] \mathrm{Cov}\left(Z_k, \sum_{i=1}^{+\infty} \mathbbm{1}_{\{Z_{k+h-1} \ge i\}}\right)\\
&= \mu \, \mathrm{Cov}(Z_k, Z_{k+h-1}).
\end{align*}
wherein $Z_{1,i}$ denotes the number of offsprings of the point $i$ of generation $k+h-1$, is independent of $Z_{1,j}$ ($i \ne j$), of $Z_{k+h-1}$ and of $Z_k$, and has the same distribution as $Z_1$.
\end{proof}

Let $T_i^k$ denote the time of arrival of the $i$-th point of generation $k$.
It has a parent $T_j^{k-1}$ (when $k > 0$).
Let $\Delta^k_i$ be the associated inter-arrival time, \textit{i.e.} $\Delta^k_i = T_i^k - T_j^{k-1}$.
Then, for each point $i$ of generation $k$, there exists a sequence $(\alpha_{i,k}^{(j)})_{1 \le j \le k}$, with $\alpha_{i,k}^{(k)} = i$, denoting the indices of the ancestors of $T_i^k$, such that 
\begin{equation*}
T_i^k = \sum_{j=1}^k \Delta_{\alpha_{i,k}^{(j)}}^j.
\end{equation*}
As a consequence, we get the following lemma:
\begin{lemma}
\label{lem:vonbahr}
For $k \in \mathbb{N}$ and $1 \le i, j \le Z_k$,
\begin{itemize}
\item[(i)] $T_i^k$ and $T_j^k$ are identically distributed, with distribution function equal to the $k$-multiple convolution of $h^\ast$ with itself,
\item[(ii)] For $\delta > 0$, there is an upper bound on the $m$-th moment of $T_1^k$:
\begin{equation*}
\mathbb{E}\big[(T_1^k)^{1+\delta}\big] \le k^{1+\delta} ~\mathbb{E}\big[(\Delta_1^1)^{1+\delta}\big] = k^{1+\delta} \, \nu_{1+\delta}
\end{equation*}
where $\nu_{1+\delta} \coloneqq \int_\mathbb{R} t^{1+\delta} h^\ast(t)\dd t$.
\end{itemize}
\end{lemma}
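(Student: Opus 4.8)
The plan is to build everything on the additive decomposition $T_i^k = \sum_{j=1}^k \Delta_{\alpha_{i,k}^{(j)}}^j$ recorded just above the statement, which expresses the arrival time of any generation-$k$ point as a sum of $k$ inter-arrival times taken along its ancestral line back to the immigrant at time $0$. Both parts of the lemma reduce to understanding the joint law of these $k$ increments.

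For part (i), I would first argue that the inter-arrival times along a fixed ancestral line are independent and identically distributed with density $h^\ast$. By the cluster construction of Section \ref{sec:branch}, the offsprings of a point located at time $\tau$ form an inhomogeneous Poisson process with intensity $h(\cdot - \tau)$; conditionally on their number, their displacements relative to the parent are i.i.d.\ with density $h/\mu = h^\ast$, and these displacements are independent of the parent's position and of the displacements occurring at other generations. Consequently, conditionally on the existence of the $i$-th point of generation $k$, the increments $\Delta_{\alpha_{i,k}^{(1)}}^1, \ldots, \Delta_{\alpha_{i,k}^{(k)}}^k$ along its line are i.i.d.\ with density $h^\ast$, so $T_i^k$ has the law of the $k$-fold convolution of $h^\ast$ with itself. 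Since this law does not depend on the index $i$, all points of a given generation are identically distributed, which is (i).

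For part (ii), I would combine the same decomposition with a convexity argument. Because $1+\delta \ge 1$, the map $x \mapsto x^{1+\delta}$ is convex on $\mathbb R_{\ge 0}$, so Jensen's inequality applied to the average of the $k$ nonnegative increments yields the pointwise bound
\begin{equation*}
(T_1^k)^{1+\delta} = \Big(\sum_{j=1}^k \Delta_{\alpha_{1,k}^{(j)}}^j\Big)^{1+\delta} \le k^{\delta}\sum_{j=1}^k \big(\Delta_{\alpha_{1,k}^{(j)}}^j\big)^{1+\delta}.
\end{equation*}
Taking expectations and using that each increment has $(1+\delta)$-moment equal to $\mathbb E[(\Delta_1^1)^{1+\delta}] = \nu_{1+\delta}$ gives $\mathbb E[(T_1^k)^{1+\delta}] \le k^{\delta}\cdot k\cdot \nu_{1+\delta} = k^{1+\delta}\nu_{1+\delta}$, as claimed.

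The only delicate point is the independence and identical-distribution structure underlying part (i): one must verify that the successive displacements along an ancestral line are genuinely i.i.d.\ with density $h^\ast$ and, crucially, independent of the branching mechanism that dictates which individuals survive to generation $k$. Once this is secured, the moment bound in (ii) is an immediate consequence of convexity and needs no further input; in particular it holds for every realisation of the (random) ancestral line, so no conditioning on $Z_k$ is required in the final expectation.
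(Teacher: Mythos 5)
Your proposal is correct and follows essentially the same route as the paper: part (i) rests on the increments $\Delta_{\alpha_{i,k}^{(j)}}^j$ along the ancestral line being i.i.d.\ with density $h^\ast$, and part (ii) derives the pointwise bound $(T_1^k)^{1+\delta} \le k^{\delta}\sum_{j=1}^k \big(\Delta_{\alpha_{1,k}^{(j)}}^j\big)^{1+\delta}$ before taking expectations. The only cosmetic difference is that you obtain this power-mean inequality via Jensen's inequality whereas the paper invokes H\"older's inequality; the two are interchangeable here and yield the identical bound.
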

\begin{proof}
Statement (i) follows from the variables $\Delta_i^k$ being independent of $\Delta_j^l$ for $(i, k) \ne (j, l)$, and identically distributed with density function $h^\ast$.
For statement (ii), the upper bound of the $(1+\delta)$-th order moment of $T_1^k$ can be obtained using the following Hölder's inequality:
	\begin{equation*}
		T_1^k 
		= \sum_{j=1}^k \Delta_{\alpha_{1,k}^{(j)}}^j
		\le \left( \sum_{j=1}^k 1 \right)^\frac{\delta}{1+\delta} \left( \sum_{j=1}^k \left(\Delta_{\alpha_{1,k}^{(j)}}^j\right)^{1+\delta} \right)^\frac{1}{1+\delta}.
	\end{equation*}
\end{proof}

Additionally, since for any point of the branching process offsprings are generated by a Poisson process, the arrival times, say $\Delta_i^k$, are independent from the number of offsprings generated at 
the current or past generations.
Conversely, since the reproduction mean $\mu$ does not depend on the time, the number of offsprings generated at any generation, say $Z_l$, are independent from the past arrival times.
Consequently, we have the following lemma:
\begin{lemma}
\label{lem:indep}
For $k,l \in \mathbb{N}$ and $1 \le i \le Z_k$,
$T_i^k$ and $Z_l$ are independent.
\end{lemma}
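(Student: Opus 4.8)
The plan is to exploit the two independent sources of randomness hidden in the branching construction: the \emph{genealogy} (who begets whom, and hence every generation size $Z_l$), and the \emph{displacements} carried by the edges of the genealogical tree (which fix the positions $T_i^k$ once the genealogy is fixed). First I would make this decomposition explicit. Because the offspring of any given point form an inhomogeneous Poisson process with intensity measure $h(\cdot - T_{\mathrm{parent}}) = \mu\, h^\ast(\cdot - T_{\mathrm{parent}})$ of total mass $\mu$, the number of offspring of that point is $\mathrm{Poisson}(\mu)$ and, \emph{conditionally} on this number, their displacements from the parent are i.i.d.\ with density $h^\ast$, the count and the displacements being independent. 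Applying this at every node and across all generations, the branching process $N(\cdot \mid 0)$ may be realised by first drawing a Galton--Watson genealogical tree $\mathcal G$ with $\mathrm{Poisson}(\mu)$ offspring law, and then, \emph{independently} of $\mathcal G$, attaching to each edge of $\mathcal G$ a displacement drawn i.i.d.\ from $h^\ast$.

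With this construction the generation size $Z_l$ is a function of $\mathcal G$ alone, since it merely counts the nodes of $\mathcal G$ at depth $l$. I would therefore prove the stronger statement that, on the event $\{Z_k \ge i\}$, the arrival time $T_i^k$ is independent of the whole genealogy $\mathcal G$, which immediately yields independence from $Z_l$ for every $l$. Fix $i$ and work on $\{Z_k \ge i\}$, which is $\sigma(\mathcal G)$-measurable. Conditionally on $\mathcal G$, the $i$-th point of generation $k$ is a well-defined vertex at depth $k$, and $T_i^k = \sum_{j=1}^k \Delta_{\alpha_{i,k}^{(j)}}^j$ is the sum of the $k$ edge-displacements along the unique path joining it to the root; by the construction these $k$ displacements are i.i.d.\ with density $h^\ast$ and are drawn independently of $\mathcal G$. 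Hence the conditional law of $T_i^k$ given $\mathcal G$ is the $k$-fold convolution of $h^\ast$ with itself---precisely statement (i) of Lemma \ref{lem:vonbahr}---which crucially does \emph{not} depend on $\mathcal G$.

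To conclude, since the conditional distribution of $T_i^k$ given $\sigma(\mathcal G)$ is almost surely equal to a fixed law not depending on the realisation, a one-line computation (multiply by $\mathbbm 1_H$ for $H \in \sigma(\mathcal G)$, take expectations, and pull out the constant $\mathbb E[f(T_i^k)]$) shows that $T_i^k$ is independent of $\sigma(\mathcal G)$; restricting to $H \in \sigma(Z_l) \subset \sigma(\mathcal G)$ gives the claim. The one delicate point I expect is the justification of the independent two-stage construction, that is, the clean separation of the genealogy from the displacements. This rests entirely on the elementary Poisson fact that, for a Poisson process, the total count and, given the count, the i.i.d.\ locations of its points are mutually independent---exactly the content of the two remarks preceding the lemma---after which the remainder of the argument is routine bookkeeping.
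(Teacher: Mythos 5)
Your proposal is correct and follows essentially the same route as the paper, which justifies the lemma informally via the two sentences preceding it and the subsequent remark: the separation of the branching process into a Galton--Watson genealogy and independently attached edge displacements, resting on the Poisson property that the offspring count and the (conditionally i.i.d.) offspring locations are independent. You merely make that two-stage construction and the resulting conditional-law argument explicit, which is a faithful elaboration rather than a different approach.
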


\begin{remark} 
This lemma separates the genealogy of the Galton--Watson process $(Z_k)$ from the arrival times $(T_i^k)$ of the branching process, analogously to how the Poisson process is a binomial process with Poisson-distributed number of points.
Then, a cluster in a Hawkes process is equivalent to a Galton--Watson process $(Z_k)$, upon which the ancestors $(\alpha_{i,k}^{(k-1)})$ are drawn equiprobably from the $Z_{k-1}$ possible ancestors and the $(\Delta_i^k)$ independently with distribution function $h^\ast$.
Intuitively, since each point $j$ of generation $k-1$ generates offsprings according to the same intensity measure, then each point of generation $k$ has ancestor $j$ with equiprobability.
\end{remark}

Finally, we will need the following identity for the covariance of the product of independent random variables.
\begin{lemma}
\label{lem:cov}
Let $(X_i^k)_{i, k \in \mathbb{N}}$ and $(Y_j^l)_{j, l \in \mathbb{N}}$ be two collections of random variables such that, for all $i, j, k, l \in \mathbb{N}$, the variables $X_i^k$ and $Y_j^l$ are independent.
Then
\begin{equation*}
\mathrm{Cov}(X_i^k Y_i^k, X_j^l Y_j^l) = \mathbb{E}[X_i^k X_j^l]\,\mathrm{Cov}(Y_i^k, Y_j^l) + \mathbb{E}[Y_i^k]\, \mathbb{E}[Y_j^l]\, \mathrm{Cov}(X_i^k, X_j^l).
\end{equation*}
\end{lemma}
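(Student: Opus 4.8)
The plan is to reduce the identity to the bilinear expansion of the covariance, $\mathrm{Cov}(U, V) = \mathbb{E}[UV] - \mathbb{E}[U]\,\mathbb{E}[V]$, and then to exploit the independence of the two collections to factor every expectation into a product of an ``$X$-part'' and a ``$Y$-part''. Concretely, I would first write
\begin{equation*}
\mathrm{Cov}(X_i^k Y_i^k, X_j^l Y_j^l) = \mathbb{E}[X_i^k X_j^l Y_i^k Y_j^l] - \mathbb{E}[X_i^k Y_i^k]\,\mathbb{E}[X_j^l Y_j^l],
\end{equation*}
grouping the $X$-factors and the $Y$-factors together within each term.

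The second step is to invoke independence. Reading the hypothesis as the statement that the family $(X_i^k)$ is independent of the family $(Y_j^l)$ --- exactly the situation of Lemma~\ref{lem:indep}, where the arrival times are independent of the genealogy --- each of the expectations above factorises:
\begin{align*}
\mathbb{E}[X_i^k X_j^l Y_i^k Y_j^l] &= \mathbb{E}[X_i^k X_j^l]\,\mathbb{E}[Y_i^k Y_j^l], \\
\mathbb{E}[X_i^k Y_i^k] &= \mathbb{E}[X_i^k]\,\mathbb{E}[Y_i^k], \qquad \mathbb{E}[X_j^l Y_j^l] = \mathbb{E}[X_j^l]\,\mathbb{E}[Y_j^l].
\end{align*}
Substituting these into the expansion yields $\mathrm{Cov}(X_i^k Y_i^k, X_j^l Y_j^l) = \mathbb{E}[X_i^k X_j^l]\,\mathbb{E}[Y_i^k Y_j^l] - \mathbb{E}[X_i^k]\,\mathbb{E}[X_j^l]\,\mathbb{E}[Y_i^k]\,\mathbb{E}[Y_j^l]$.

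The final step is purely algebraic: I would add and subtract the cross term $\mathbb{E}[X_i^k X_j^l]\,\mathbb{E}[Y_i^k]\,\mathbb{E}[Y_j^l]$, so that the right-hand side splits as
\begin{equation*}
\mathbb{E}[X_i^k X_j^l]\big(\mathbb{E}[Y_i^k Y_j^l] - \mathbb{E}[Y_i^k]\,\mathbb{E}[Y_j^l]\big) + \mathbb{E}[Y_i^k]\,\mathbb{E}[Y_j^l]\big(\mathbb{E}[X_i^k X_j^l] - \mathbb{E}[X_i^k]\,\mathbb{E}[X_j^l]\big),
\end{equation*}
which is precisely $\mathbb{E}[X_i^k X_j^l]\,\mathrm{Cov}(Y_i^k, Y_j^l) + \mathbb{E}[Y_i^k]\,\mathbb{E}[Y_j^l]\,\mathrm{Cov}(X_i^k, X_j^l)$, as claimed.

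The only delicate point is the factorisation of the four-fold product $\mathbb{E}[X_i^k X_j^l Y_i^k Y_j^l]$. The hypothesis is phrased as pairwise independence of each $X_i^k$ from each $Y_j^l$, which in general is strictly weaker than the joint independence of the two families that the factorisation requires. I would resolve this by interpreting the hypothesis as mutual independence of the two collections (equivalently, independence of the $\sigma$-algebras they generate), which is what genuinely holds in the application via Lemma~\ref{lem:indep}; under it the grouped products $X_i^k X_j^l$ and $Y_i^k Y_j^l$ are independent, and the factorisation is legitimate. Everything else is elementary.
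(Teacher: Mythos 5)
Your proof is correct and is essentially the paper's own argument: expand the covariance, factor the expectations using the independence of the two collections, and add and subtract the cross term $\mathbb{E}[X_i^k X_j^l]\,\mathbb{E}[Y_i^k]\,\mathbb{E}[Y_j^l]$. Your remark that the stated pairwise independence must be read as independence of the two families (which is what Lemma~\ref{lem:indep} actually delivers in the application) is a valid and worthwhile clarification, since the factorisation of the four-fold product genuinely requires it.
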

\begin{proof}
Writing the expression of the covariance then adding and substracting the term $\mathbb{E}[X_i^k X_j^l]\,\mathbb{E}[Y_i^k]\,\mathbb{E}[Y_j^l]$ yields the relation.
\end{proof}

We can now derive an upper bound for $\mathrm{Cov}\left(Z_k^{(s,t]}, Z_l^{(t+r, u]}\right)$:
\begin{lemma}
\label{lem:onecluster}
Let $s,t,u \in \mathbb{R}$ and $r > 0$ such that $s < t < t+r < u$.
Suppose that there exists $\delta > 0$ such that $\nu_{1+\delta} < \infty$.
Then
\begin{equation*}
\left| \mathrm{Cov}\left(Z_k^{(s,t]}, Z_l^{(t+r, u]}\right) \right| \le 2 \, \frac{l^{1+\delta} \, \nu_{1+\delta}}{(t+r)^{1+\delta}} \, \mu^{k\vee l} \, \frac{1-\mu^{k\wedge l+1}}{1-\mu},
\end{equation*}
where $k\vee l = \mathrm{max}\,(k, l)$ and $k\wedge l = \mathrm{min}\,(k, l)$.
\end{lemma}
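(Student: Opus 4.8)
The plan is to split each count into a contribution from the tree's \emph{genealogy} and one from the \emph{positions} of its points, so that the independence supplied by Lemma \ref{lem:indep} lets me apply the product-covariance identity of Lemma \ref{lem:cov}. Writing $A_i^k = \mathbbm{1}_{\{i \le Z_k\}}$ for the genealogy indicator (``the $i$-th point of generation $k$ exists'') and $B_i^k = \mathbbm{1}_{\{T_i^k \in (s,t]\}}$, $B_j^l = \mathbbm{1}_{\{T_j^l \in (t+r,u]\}}$ for the position indicators, I would record
\[
Z_k^{(s,t]} = \sum_{i \ge 1} A_i^k B_i^k, \qquad Z_l^{(t+r,u]} = \sum_{j \ge 1} A_j^l B_j^l,
\]
extending the position variables to every index (those with $i > Z_k$ are killed by $A_i^k = 0$). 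By the decoupling described in the remark following Lemma \ref{lem:indep}, the whole genealogy $(A_i^k)$ is independent of the whole family of positions $(B_i^k)$, so Lemma \ref{lem:cov} applies termwise.

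Then I would expand $\mathrm{Cov}(Z_k^{(s,t]}, Z_l^{(t+r,u]}) = \sum_{i,j} \mathrm{Cov}(A_i^k B_i^k, A_j^l B_j^l)$ and feed each summand through Lemma \ref{lem:cov}. Since all $T_i^k$ share the same law (Lemma \ref{lem:vonbahr}(i)), the marginals $p_k \coloneqq \mathbb{E}[B_i^k]$ and $q_l \coloneqq \mathbb{E}[B_j^l]$ are index-independent, so the second term of Lemma \ref{lem:cov} collapses, using $\sum_i A_i^k = Z_k$, to the exact quantity $p_k q_l \,\mathrm{Cov}(Z_k, Z_l)$ (kept with its sign, not bounded prematurely). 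For the first term I would use that $(s,t]$ and $(t+r,u]$ are disjoint and the $B$'s are indicators to get the crude but sufficient bound $|\mathrm{Cov}(B_i^k, B_j^l)| \le q_l$; summing against $\sum_{i,j}\mathbb{E}[A_i^k A_j^l] = \mathbb{E}[Z_k Z_l]$ bounds it by $q_l\,\mathbb{E}[Z_k Z_l]$. With $p_k \le 1$ and the closed forms from Lemma \ref{lem:crossedmoments}, this gives
\[
\big|\mathrm{Cov}(Z_k^{(s,t]}, Z_l^{(t+r,u]})\big| \le q_l\,\big(\mathbb{E}[Z_k Z_l] + \mathrm{Cov}(Z_k, Z_l)\big) \le 2\, q_l\, \mu^{k\vee l}\,\frac{1-\mu^{k\wedge l +1}}{1-\mu},
\]
where the factor $2$ comes from $\tfrac{1-\mu^{k\wedge l}}{1-\mu} \le \tfrac{1-\mu^{k\wedge l+1}}{1-\mu}$.

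It remains to produce the spatial decay hidden in $q_l$. In the causal regime the arrival times are nonnegative and $(t+r,u]$ sits to the right of the immigrant, so Markov's inequality gives $q_l = \mathbb{P}(T_j^l \in (t+r,u]) \le \mathbb{P}(T_1^l > t+r) \le (t+r)^{-(1+\delta)}\,\mathbb{E}[(T_1^l)^{1+\delta}]$, and the moment bound of Lemma \ref{lem:vonbahr}(ii) replaces the last expectation by $l^{1+\delta}\nu_{1+\delta}$. Substituting this estimate of $q_l$ delivers exactly the claimed inequality.

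I expect the main obstacle to be conceptual rather than computational: correctly separating the randomness of the branching structure from that of the point locations, so that both the random-summation identities $\sum_i A_i^k = Z_k$ and the independence needed for Lemma \ref{lem:cov} are legitimate. Once this decoupling is in place, the only quantitative input is the single Markov estimate on $T_1^l$, the rest being bookkeeping of the Galton--Watson moments of Lemma \ref{lem:crossedmoments}. A secondary point to handle cleanly is the passage to absolute values (retaining the exact $\mathrm{Cov}(Z_k,Z_l)$ term) and the implicit assumption $t+r>0$ under which the Markov bound is meaningful.
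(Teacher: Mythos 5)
Your proposal is correct and follows essentially the same route as the paper's proof: the same decomposition of $Z_k^{(s,t]}$ into genealogy indicators $\mathbbm{1}_{\{Z_k \ge i\}}$ times position indicators $\mathbbm{1}_{\{T_i^k \in (s,t]\}}$, the same termwise application of Lemmas \ref{lem:indep} and \ref{lem:cov}, the same crude bound of the position covariance by $\mathbb{P}(T_j^l \ge t+r)$ followed by Markov's inequality and Lemma \ref{lem:vonbahr}(ii), and the same assembly via $\mathbb{E}[Z_kZ_l] + \mathrm{Cov}(Z_k,Z_l)$ and Lemma \ref{lem:crossedmoments}. The only cosmetic difference is that you keep the exact term $p_k q_l\,\mathrm{Cov}(Z_k,Z_l)$ before bounding $p_k \le 1$, where the paper bounds both factors at once; your remark about the implicit requirement $t+r>0$ is a fair observation consistent with the paper's setup.
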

\begin{proof}
We have
\begin{align*}
\mathrm{Cov}\left(Z_k^{(s,t]}, Z_l^{(t+r, u]}\right) 
&= \mathrm{Cov}\left(\sum_{i=1}^{Z_k} \mathbbm{1}_{\{T_i^k \in (s, t]\}}, \sum_{j=1}^{Z_l} \mathbbm{1}_{\{T_j^l \in (t+r, u]\}}\right)\\
&= \sum_{i=1}^{+\infty} \sum_{j=1}^{+\infty} \mathrm{Cov}\left( \mathbbm{1}_{\{Z_k \ge i\}} \mathbbm{1}_{\{T_i^k \in (s, t]\}},  \mathbbm{1}_{\{Z_l \ge j\}} \mathbbm{1}_{\{T_j^l \in (t+r, u]\}}\right).
\end{align*}
Then, by Lemmas \ref{lem:indep} and \ref{lem:cov},
\begin{align*}
&\mathrm{Cov}\left( \mathbbm{1}_{\{Z_k \ge i\}} \mathbbm{1}_{\{T_i^k \in (s, t]\}},  \mathbbm{1}_{\{Z_l \ge j\}} \mathbbm{1}_{\{T_j^l \in (t+r, u]\}}\right)\\
&\qquad\qquad = \mathbb{E} \left[ \mathbbm{1}_{\{Z_k \ge i\}} \mathbbm{1}_{\{Z_l \ge j\}} \right] \mathrm{Cov}\left( \mathbbm{1}_{\{T_i^k \in (s, t]\}}, \mathbbm{1}_{\{T_j^l \in (t+r, u]\}}\right) \\
&\qquad\qquad + \mathbb{E}\left[ \mathbbm{1}_{\{T_i^k \in (s, t]\}} \right] \mathbb{E}\left[ \mathbbm{1}_{\{T_j^l \in (t+r, u]\}} \right] \mathrm{Cov}\left(\mathbbm{1}_{\{Z_k \ge i\}}, \mathbbm{1}_{\{Z_l \ge j\}}\right).
\end{align*}

\noindent For the first term,
\begin{align*}
\mathrm{Cov}\left( \mathbbm{1}_{\{T_i^k \in (s, t]\}}, \mathbbm{1}_{\{T_j^l \in (t+r, u]\}} \right) 
&= \mathbb{E} \left[ \mathbbm{1}_{\{T_i^k \in (s, t]\}} \mathbbm{1}_{\{T_j^l \in (t+r, u]\}} \right] - \mathbb{E} \left[ \mathbbm{1}_{\{T_i^k \in (s, t]\}} \right] \mathbb{E} \left[ \mathbbm{1}_{\{T_j^l \in (t+r, u]\}} \right]\\
& \le \mathbb{E} \left[ \mathbbm{1}_{\{T_j^l \in (t+r, u]\}} \right] \\
& \le \mathbb{P} \left( T_j^l \ge t+r \right)\\
& \le \frac{\mathbb{E} \left[ (T_1^l)^{1+\delta} \right]}{(t+r)^{1+\delta}}\\
& \le \frac{l^{1+\delta} \, \nu_{1+\delta}}{(t+r)^{1+\delta}},
\end{align*}
using Markov's inequality for the second to last inequality, and Lemma \ref{lem:vonbahr} for the last one.
Similarly, 
\begin{align*}
\mathrm{Cov}\left( \mathbbm{1}_{\{T_i^k \in (s, t]\}}, \mathbbm{1}_{\{T_j^l \in (t+r, u]\}} \right) 
&= \mathbb{E} \left[ \mathbbm{1}_{\{T_i^k \in (s, t]\}} \mathbbm{1}_{\{T_j^l \in (t+r, u]\}} \right] - \mathbb{E} \left[ \mathbbm{1}_{\{T_i^k \in (s, t]\}} \right] \mathbb{E} \left[ \mathbbm{1}_{\{T_j^l \in (t+r, u]\}} \right]\\
& \ge - \mathbb{E} \left[ \mathbbm{1}_{\{T_j^l \in (t+r, u]\}} \right] \\
& \ge - \frac{l^{1+\delta} \, \nu_{1+\delta}}{(t+r)^{1+\delta}},
\end{align*}

\noindent The second term is straightforward,
\begin{align*}
\left| \mathbb{E}\left[ \mathbbm{1}_{\{T_i^k \in (s, t]\}} \right] \mathbb{E}\left[ \mathbbm{1}_{\{T_j^l \in (t+r, u]\}} \right] \right|
&\le \mathbb{E}\left[ \mathbbm{1}_{\{T_j^l \in (t+r, u]\}} \right] \\
&\le \frac{l^{1+\delta} \, \nu_{1+\delta}}{(t+r)^{1+\delta}}.
\end{align*}

\noindent Then:
\begin{align*}
&\left| \sum_{i=1}^{+\infty} \sum_{j=1}^{+\infty} \mathrm{Cov}\left( \mathbbm{1}_{\{Z_k \ge i\}} \mathbbm{1}_{\{T_i^k \in (s, t]\}},  \mathbbm{1}_{\{Z_l \ge j\}} \mathbbm{1}_{\{T_j^l \in (t+r, u]\}}\right) \right|\\
& \qquad \le \frac{l^{1+\delta} \, \nu_{1+\delta}}{(t+r)^{1+\delta}} \left| \sum_{i=1}^{+\infty} \sum_{j=1}^{+\infty} \mathbb{E} \left[ \mathbbm{1}_{\{Z_k \ge i\}} \mathbbm{1}_{\{Z_l \ge j\}} \right] + \sum_{i=1}^{+\infty} \sum_{j=1}^{+\infty} \mathrm{Cov}\left(\mathbbm{1}_{\{Z_k \ge i\}}, \mathbbm{1}_{\{Z_l \ge j\}}\right) \right|\\
& \qquad = \frac{l^{1+\delta} \, \nu_{1+\delta}}{(t+r)^{1+\delta}} \ \big| \mathbb{E} \left[ Z_k Z_l \right] + \mathrm{Cov} \left( Z_k, Z_l \big) \right|\\
& \qquad \le 2 \, \frac{l^{1+\delta} \, \nu_{1+\delta}}{(t+r)^{1+\delta}} \: \mu^{k\vee l} \: \frac{1-\mu^{k\wedge l+1}}{1-\mu},
\end{align*}
using Lemma \ref{lem:crossedmoments} for the last inequality.
\end{proof}

Straightforwardly, since $\sum \mu^k$ and $\sum l^{1+\delta} \mu^l$ are summable for $\delta > 0$, we get the following lemma:
\begin{lemma}
\label{lem:bigO}
Let $s,t,u \in \mathbb{R}$ and $r > 0$ such that $s < t < t+r < u$.
Suppose that there exists $\delta > 0$ such that $\, \nu_{1+\delta} < \infty$. Then,
\begin{equation*}
\left| \mathrm{Cov}\Big(N\big((s, t] \big| 0\big), N\big((t+r, u] \big| 0\big) \Big) \right| = \mathcal{O} \left( \frac{1}{(t+r)^{1+\delta}} \right).
\end{equation*}
\end{lemma}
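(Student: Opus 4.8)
The plan is to take the series representation of the branching-process covariance that was established just before the lemma,
\begin{equation*}
\mathrm{Cov}\Big(N\big((s, t] \big| 0\big), N\big((t+r, u] \big| 0\big) \Big) = \sum_{k=0}^{+\infty} \sum_{l=0}^{+\infty} \mathrm{Cov}\left(Z_k^{(s,t]}, Z_l^{(t+r, u]}\right),
\end{equation*}
and to bound its absolute value termwise using Lemma \ref{lem:onecluster}. By the triangle inequality the left-hand side is at most $\sum_{k,l} \big| \mathrm{Cov}(Z_k^{(s,t]}, Z_l^{(t+r,u]}) \big|$, and each summand is controlled by the quantity $2\,l^{1+\delta}\nu_{1+\delta}(t+r)^{-(1+\delta)}\,\mu^{k\vee l}\,(1-\mu^{k\wedge l+1})/(1-\mu)$ from Lemma \ref{lem:onecluster}.

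The crucial observation is that the prefactor $2\nu_{1+\delta}(t+r)^{-(1+\delta)}$ does not depend on the summation indices $k$ and $l$, so it can be pulled outside the double sum. What remains is to show that the residual series
\begin{equation*}
\sum_{k=0}^{+\infty} \sum_{l=0}^{+\infty} l^{1+\delta}\, \mu^{k\vee l}\, \frac{1-\mu^{k\wedge l+1}}{1-\mu}
\end{equation*}
converges to a finite constant depending only on $\mu$ and $\delta$. Since $0 < \mu < 1$, I would first bound the factor $(1-\mu^{k\wedge l+1})/(1-\mu)$ by $1/(1-\mu)$ uniformly. Then, to handle the mixed maximum $\mu^{k\vee l}$, I would perform the inner sum over $k$ for fixed $l$ by splitting into the range $k \le l$ (where $k\vee l = l$, contributing $(l+1)\mu^l$ terms) and $k > l$ (where $k\vee l = k$, contributing a geometric tail $\mu^{l+1}/(1-\mu)$). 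This reduces the double sum to a single series whose general term is dominated by $l^{2+\delta}\mu^l$, which is summable because geometric decay beats polynomial growth for any fixed $\delta > 0$.

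Combining these steps yields $\big| \mathrm{Cov}(N((s,t]\,|\,0), N((t+r,u]\,|\,0)) \big| \le C_{\mu,\delta}\,\nu_{1+\delta}\,(t+r)^{-(1+\delta)}$ for a finite constant $C_{\mu,\delta}$, which is exactly the claimed $\mathcal{O}\big((t+r)^{-(1+\delta)}\big)$ bound. I do not expect any serious obstacle here: the heavy lifting was already done in Lemma \ref{lem:onecluster}, and the only real content is verifying summability of the residual series, which is routine once the $\mu^{k\vee l}$ term is resolved by the case split described above. The one point requiring minor care is ensuring the polynomial factor $l^{1+\delta}$ (and the extra factor of $l$ picked up from the $k\le l$ count) is still absorbed by the geometric factor, which holds for every $\delta > 0$.
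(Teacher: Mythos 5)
Your proof is correct and follows essentially the same route as the paper, which disposes of this lemma in one line by noting that $\sum_k \mu^k$ and $\sum_l l^{1+\delta}\mu^l$ are summable; you simply spell out the routine verification (triangle inequality, pulling out the $(t+r)^{-(1+\delta)}$ prefactor, and the $k\le l$ versus $k>l$ split to resolve $\mu^{k\vee l}$) that the paper leaves implicit.
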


All that is left to prove Theorem \ref{THM:ALPHAMIXING} and Corollary \ref{COR:NONSTATIONARY} is to integrate the upper bound with respect to the first-moment measure of the  centre process.
Using the notations of Lemmas \ref{lem:covcond} and \ref{lem:bigO}, and with $M_c(\cdot) = \eta(\cdot) \ell(\cdot)$ where $\ell(\cdot)$ is the Lebesgue measure,
\begin{align*}
\big| \mathrm{Cov} \big(\mathbbm{1}_\mathcal{A}(N), \mathbbm{1}_\mathcal{B}(N)\big) \big| 
&\le \int_\mathbb{R} \Big| \mathrm{Cov}\Big(N\big((s, t] \big| y\big), N\big((t+r, u] \big| y\big)\Big) \Big| \; M_c(\dd y)\\
&= \int_{-\infty}^t \Big| \mathrm{Cov}\Big(N\big((s, t] \big| y\big), N\big((t+r, u] \big| y\big)\Big) \Big| \; M_c(\dd y)\\
&= \mathcal{O} \left( \int_{-\infty}^t \frac{1}{(t+r-y)^{1+\delta}} \eta(y) \dd y \right)\\
&= \mathcal{O} \left( r^{-\delta} \right),
\end{align*}
where the last inequality follows from the boundedness of $\eta(\cdot)$.
This upper bound is valid for any $s, u \in \mathbb{R}$, therefore holds for $\mathcal{A} \in \mathcal{E}_{-\infty}^t, \mathcal{B} \in \mathcal{E}_{t+r}^\infty$.

We now turn to an upper bound for the strong mixing coefficient when the reproduction kernel $h^\ast$ admits a finite exponential moment, that is, there exists $a_0 > 0$ such that 
\begin{equation*}
    M(a_0) \coloneqq \int_{\mathbb R} e^{a_0 |t|} h^\ast(t) \dd t < \infty.
\end{equation*}
Choose $a \in (0, a_0]$ such that $1 < M(a) < \mu^{-1}$.
Then, by substituting in Lemma \ref{lem:onecluster} the Markov inequality with:
\begin{align*}
    \mathbb{P} \big( T_j^l \ge t+r \big) 
    &\le \mathbb E\big[\exp\big(a |T_1^l|\big)\big] e^{-a(t+r)}\\
    &\le \mathbb E\big[\exp\big(a |\Delta_1^1|\big)\big]^l e^{-a(t+r)}\\
    &= M(a)^l e^{-a(t+r)},
\end{align*}
the term $\sum M(a)^l \mu^l$ is again summable, and Lemma \ref{lem:bigO} turns into:
\begin{equation*}
\left| \mathrm{Cov}\Big(N\big((s, t] \big| 0\big), N\big((t+r, u] \big| 0\big) \Big) \right| = \mathcal{O} \left( e^{-a(t+r)} \right).
\end{equation*}
Finally, by integrating with respect to the first-moment measure of the centre process, we get the desired result:
\begin{equation*}
    \big| \mathrm{Cov} \big(\mathbbm{1}_\mathcal{A}(N), \mathbbm{1}_\mathcal{B}(N)\big) \big| = \mathcal{O} \left( e^{-ar} \right).
\end{equation*}
\begin{flushright}
$\blacksquare$
\end{flushright}

\section{Figures of Section \ref{SEC:SIMULATIONS}}
\label{sec:figures}

\begin{figure}[hp!]
\centering
\begin{subfigure}[t]{\textwidth}
\centering
\includegraphics[width=\textwidth]{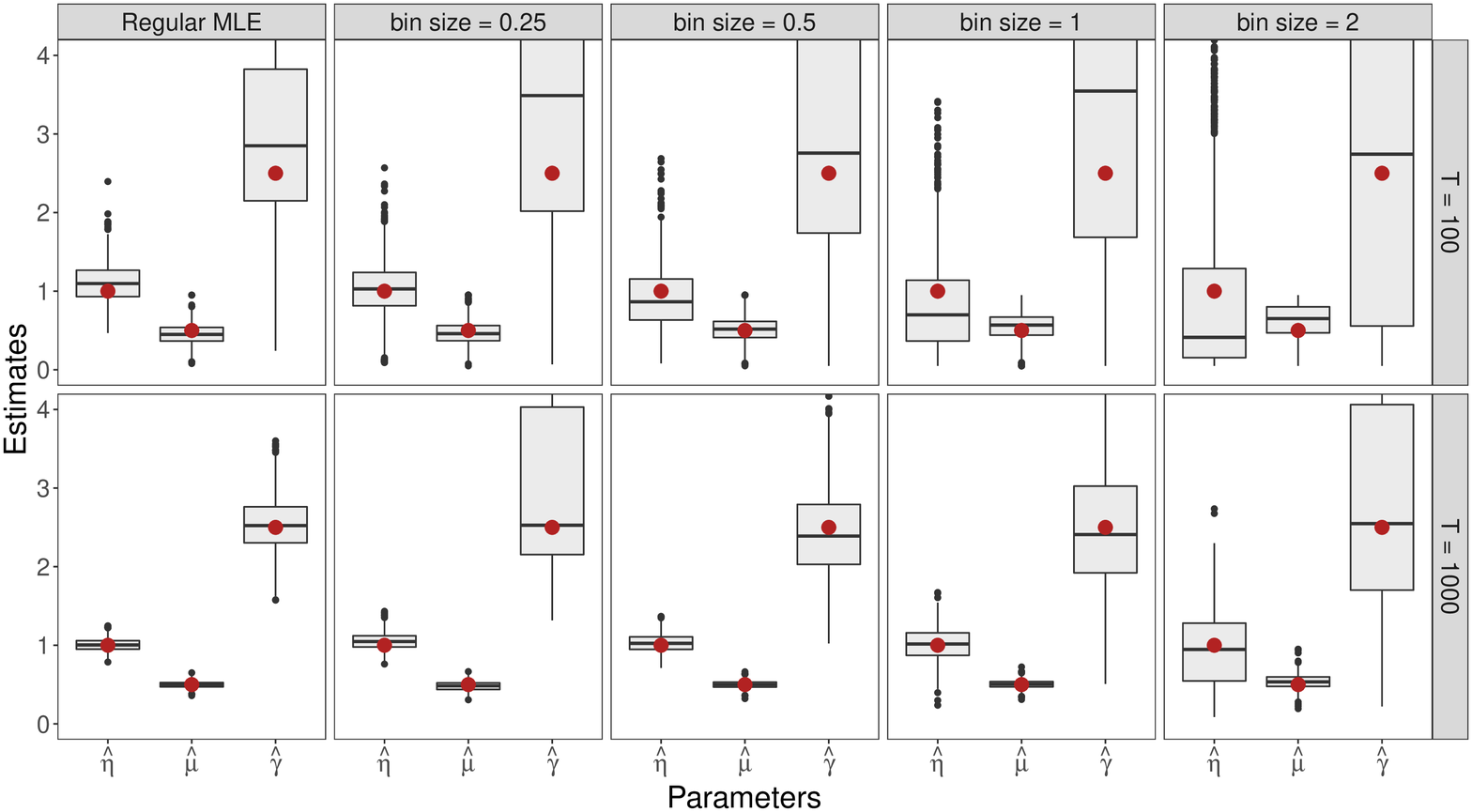}
\caption{Estimates of parameters $\eta$, $\mu$ and $\gamma$ for 1,000 simulations on the interval $[0, T]$.
True values (larger dots) are: $\eta = 1$, $\mu = 0.5$, $\gamma = 2.5$.
The left column refers to the maximum likelihood estimates. The other columns refer to the Whittle estimates according to different bin sizes.}
\label{fig:powerlaw25_estimates}
\end{subfigure} 

\vspace{2ex}
\begin{subfigure}[t]{\textwidth}
\centering
\includegraphics[width=\textwidth]{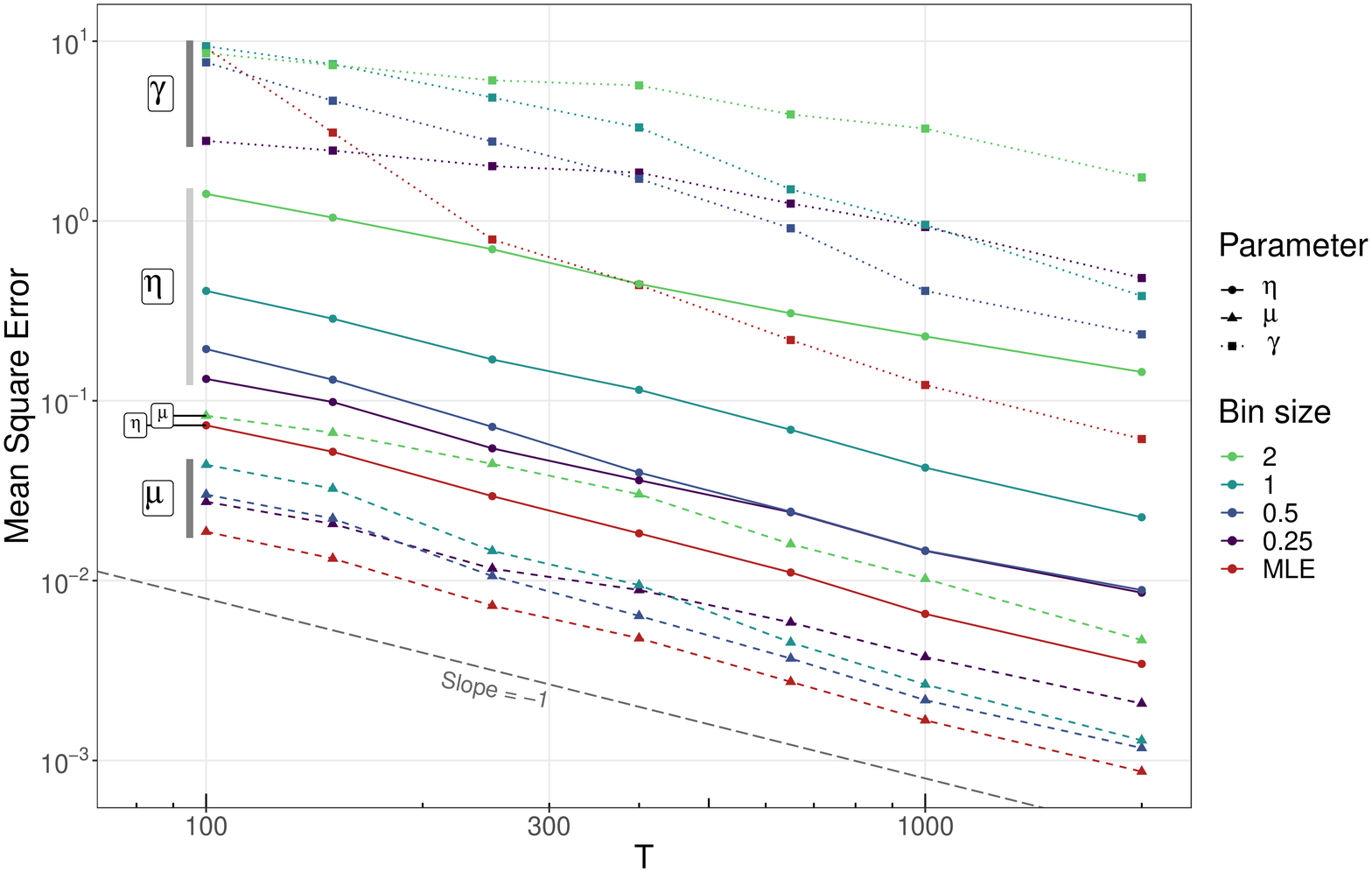}
\caption{Mean square error of the estimates of parameters $\eta$, $\mu$ and $\gamma$ for 1,000 simulations on the interval $[0,T]$, in log-log scale.
The dashed grey line represents the ideal slope of $-1$, \textit{i.e.} a rate of convergence of $\mathcal O(n^{-1})$.}
\label{fig:powerlaw25_convergence}
\end{subfigure}
\caption{Performance of the Whittle estimates for the stationary Hawkes process with immigration intensity $\eta = 1$, reproduction mean $\mu = 0.5$, and reproduction kernel $h^\ast(t) = \gamma a^\gamma (a+t)^{-\gamma - 1}$, where $\gamma = 2.5$ and $a = 1.5$.}
\end{figure} 

\begin{figure}[hp!]
\centering
\begin{subfigure}[t]{\textwidth}
\centering
\includegraphics[width=\textwidth]{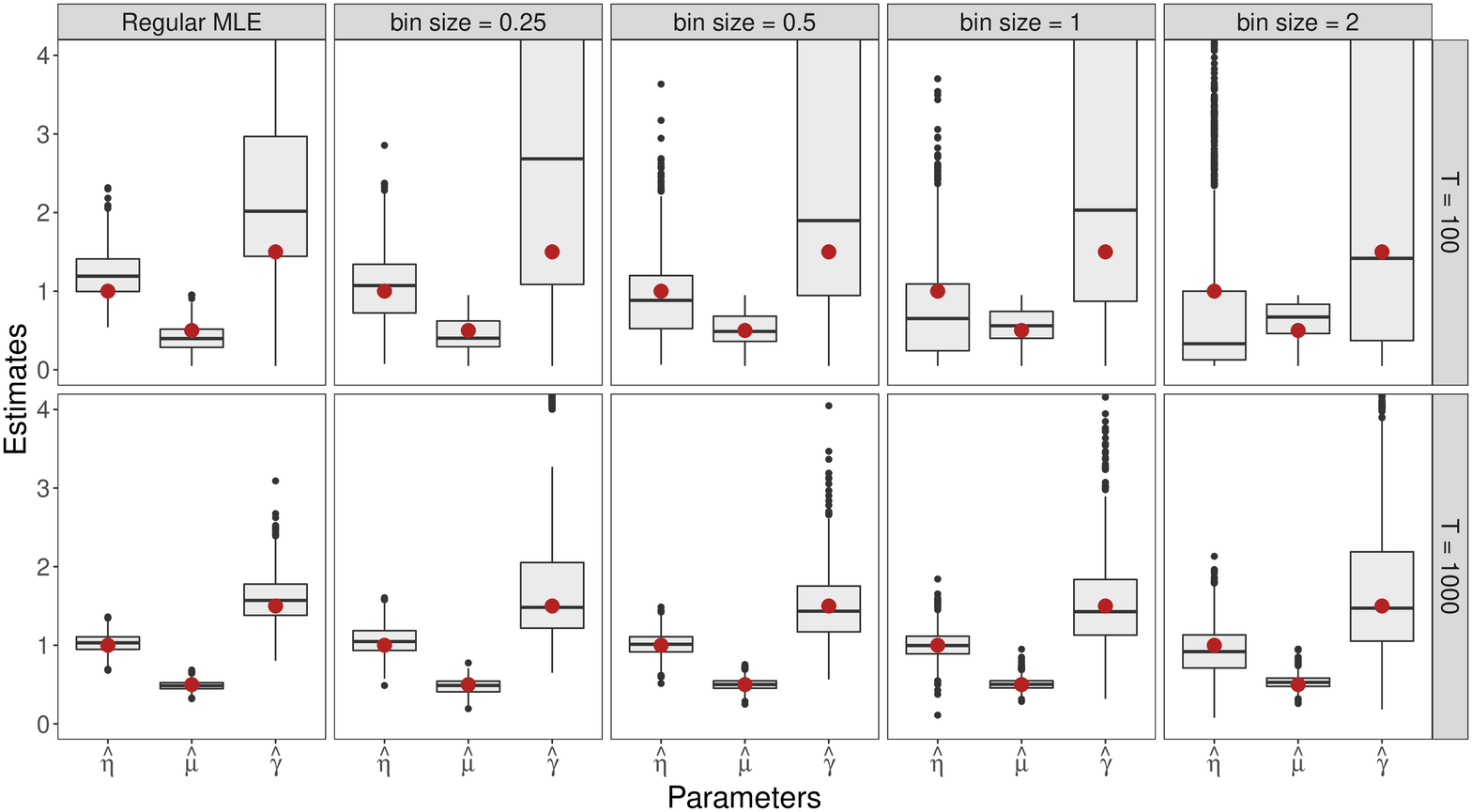}
\caption{Estimates of parameters $\eta$, $\mu$ and $\gamma$ for 1,000 simulations on the interval $[0, T]$.
True values (larger dots) are: $\eta = 1$, $\mu = 0.5$, $\gamma = 1.5$.
The left column refers to the maximum likelihood estimates. The other columns refer to the Whittle estimates according to different bin sizes.}
\label{fig:powerlaw15_estimates}
\end{subfigure} 

\vspace{2ex}
\begin{subfigure}[t]{\textwidth}
\centering
\includegraphics[width=\textwidth]{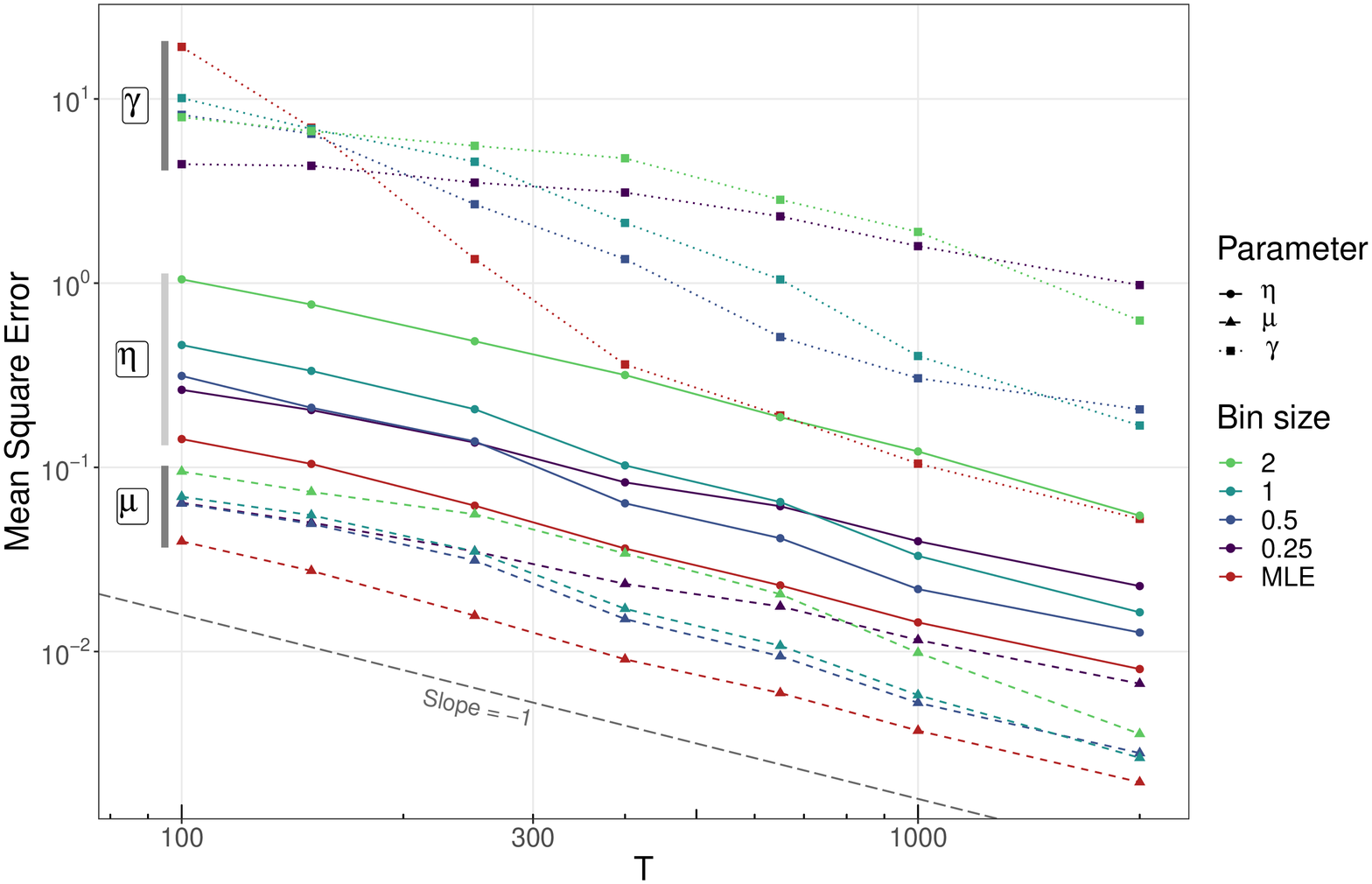}
\caption{Mean square error of the estimates of parameters $\eta$, $\mu$ and $\gamma$ for 1,000 simulations on the interval $[0,T]$, in log-log scale.
The dashed grey line represents the ideal slope of $-1$, \textit{i.e.} a rate of convergence of $\mathcal O(n^{-1})$.}
\label{fig:powerlaw15_convergence}
\end{subfigure}
\caption{Performance of the Whittle estimates for the stationary Hawkes process with immigration intensity $\eta = 1$, reproduction mean $\mu = 0.5$, and reproduction kernel $h^\ast(t) = \gamma a^\gamma (a+t)^{-\gamma - 1}$, where $\gamma = 1.5$ and $a = 1.5$.}
\end{figure} 

\begin{figure}[hp!]
\centering
\begin{subfigure}[t]{\textwidth}
\centering
\includegraphics[width=\textwidth]{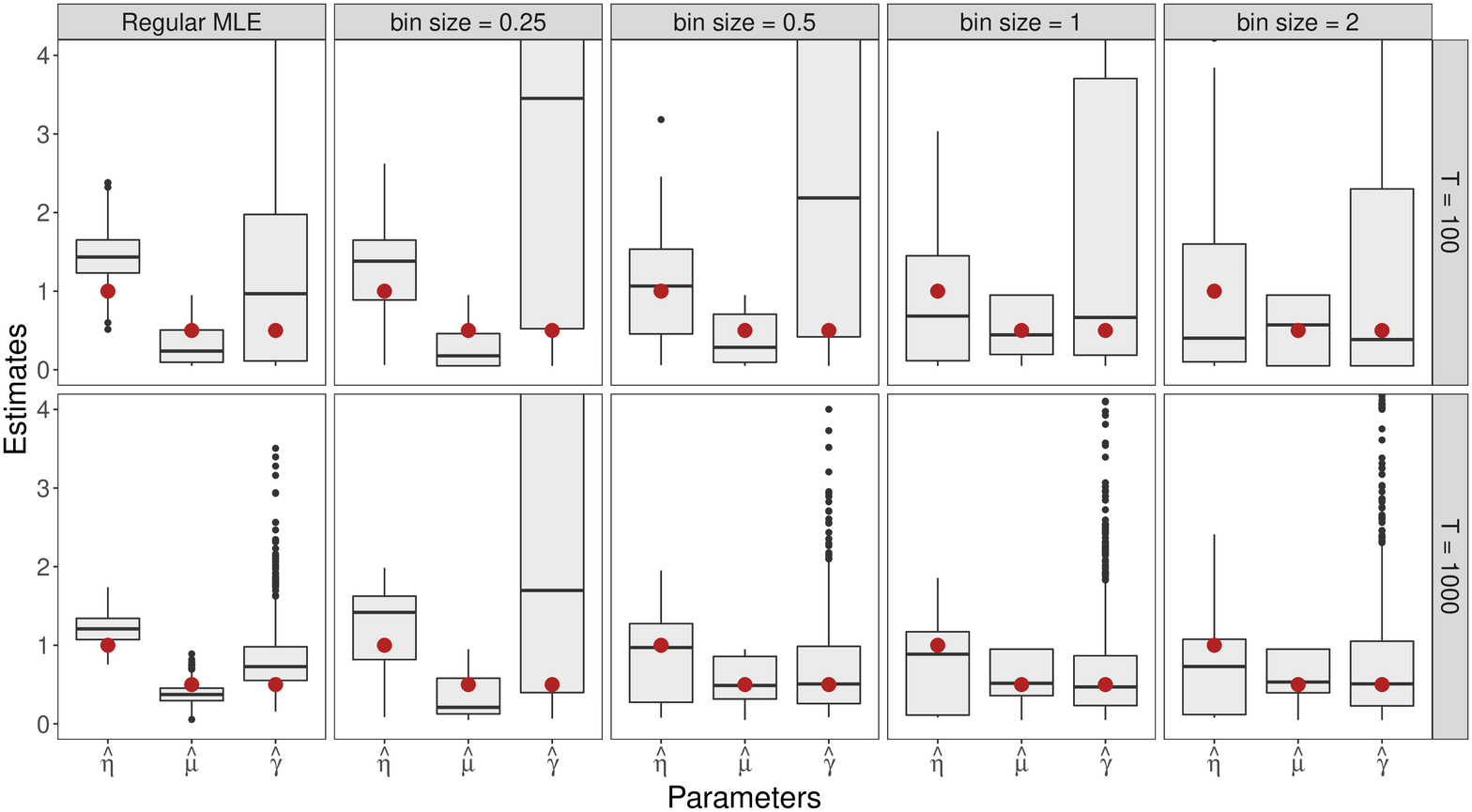}
\caption{Estimates of parameters $\eta$, $\mu$ and $\gamma$ for 1,000 simulations on the interval $[0, T]$.
True values (larger dots) are: $\eta = 1$, $\mu = 0.5$, $\gamma = 0.5$.
The left column refers to the maximum likelihood estimates. The other columns refer to the Whittle estimates according to different bin sizes.}
\label{fig:powerlaw05_estimates}
\end{subfigure} 

\vspace{2ex}
\begin{subfigure}[t]{\textwidth}
\centering
\includegraphics[width=\textwidth]{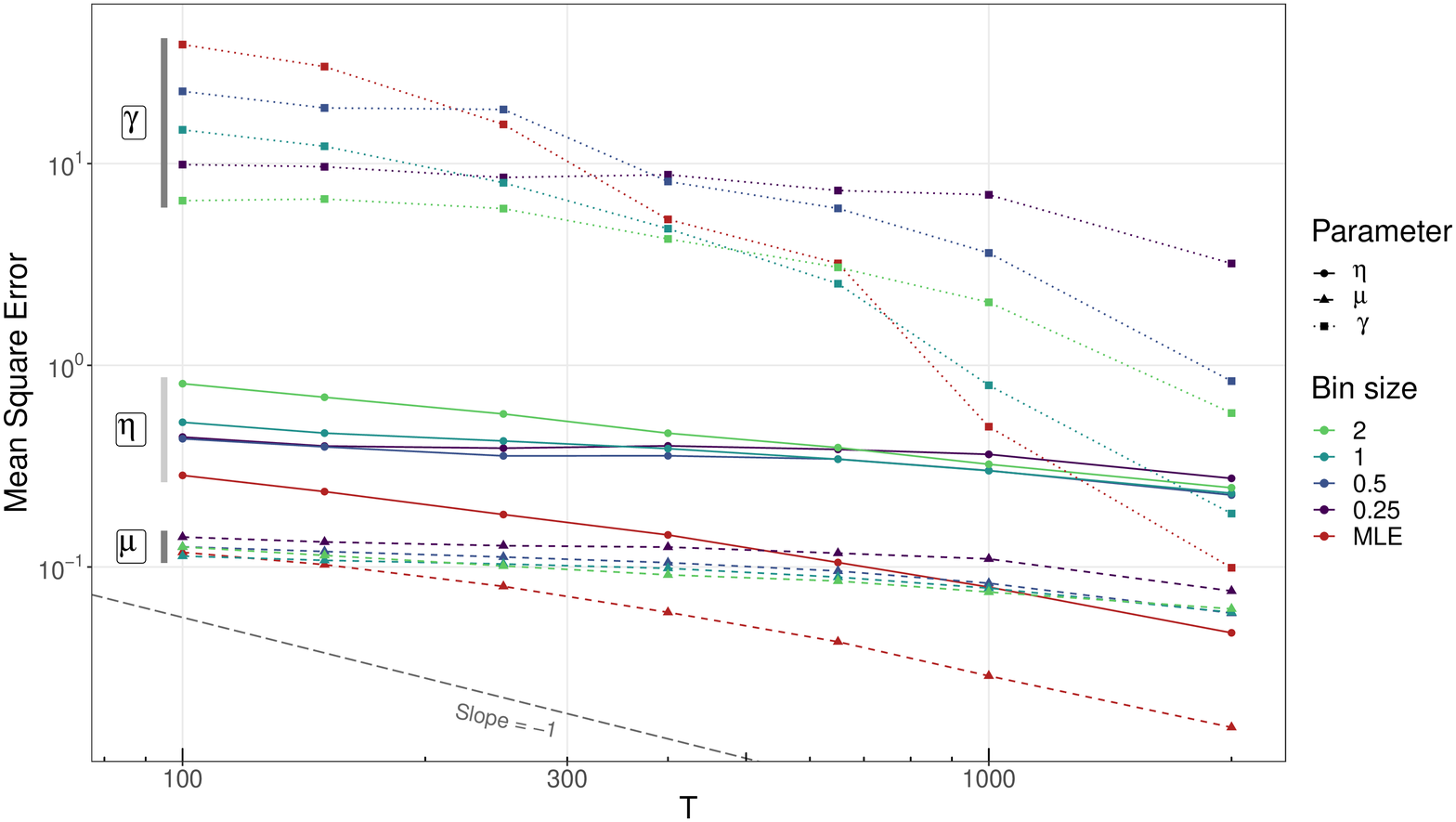}
\caption{Mean square error of the estimates of parameters $\eta$, $\mu$ and $\gamma$ for 1,000 simulations on the interval $[0,T]$, in log-log scale.
The dashed grey line represents the ideal slope of $-1$, \textit{i.e.} a rate of convergence of $\mathcal O(n^{-1})$.}
\label{fig:powerlaw05_convergence}
\end{subfigure}
\caption{Performance of the Whittle estimates for the stationary Hawkes process with immigration intensity $\eta = 1$, reproduction mean $\mu = 0.5$, and reproduction kernel $h^\ast(t) = \gamma a^\gamma (a+t)^{-\gamma - 1}$, where $\gamma = 0.5$ and $a = 1.5$.}
\end{figure} 

\end{appendix}

\section*{Acknowledgements}
The authors would like to thank Fran\c cois Roueff who suggested the use of Whittle's method for the estimation of Hawkes processes from bin-count data and Theorem \ref{THM:ALPHAMIXING}'s extension to exponentially decaying reproduction kernels. 
The authors would also like to thank the three anonymous reviewers who helped, through their remarks and suggestions, to considerably improve this article.
The authors received no specific funding for this work.
During this work, Felix Cheysson was a PhD student of UMR MIA-Paris, Université Paris-Saclay, AgroParisTech, INRAE; Epidemiology and Modeling of bacterial Evasion to Antibacterials Unit (EMEA), Institut Pasteur; and Centre de recherche en Epidémiologie et Santé des Populations (CESP), Université Paris-Saclay, UVSQ, Inserm.

\bibliographystyle{abbrvnatnourl}
\bibliography{references}

\end{document}